\def\mapright#1{\mathrel{%
\smash{\mathop{\longrightarrow}\limits^{#1}}}}
\renewcommand{\P}{\mathbb{P}}
\newcommand{\Z}{\mathbb{Z}}
\newcommand{\A}{\mathbb{A}}
\newcommand{\G}{\mathbb{G}}
\newcommand{\vp}{\varphi}
\newcommand{\ep}{\varepsilon}
\newcommand{\lra}{\longrightarrow}
\DeclareMathOperator{\Spec}{Spec}
\DeclareMathOperator{\Proj}{Proj}
\DeclareMathOperator{\res}{res}
\newcommand{\olsi}[1]{\,\overline{\!{#1}}}
\newtheorem{thm}{Theorem}[section]
\newtheorem*{thmstar}{Theorem}
\newtheorem{lemma}[thm]{Lemma}
\newtheorem{cor}[thm]{Corollary}
\newtheorem{prop}[thm]{Proposition}
\theoremstyle{definition}
\newtheorem{defn}[thm]{Definition}
\theoremstyle{remark}
\newtheorem{remark}[thm]{Remark}
\begin{document}
\title[Versal deformation 
of elliptic \textit{m}-fold points]{The versal deformation 
of elliptic \textit{m}-fold point curve singularities}

\author{Jan Stevens}
\address{\scriptsize Department of Mathematical Sciences, Chalmers University of
Technology and University of Gothenburg.
SE 412 96 Gothenburg, Sweden}
\email{stevens@chalmers.se}

\begin{abstract}
We give explicit, highly symmetric equations for the
 versal deformation of the singularity $L_{n+1}^n$ consisting
of $n+1$ lines through the origin in $\A^{n}(k)$ in generic position.  
These make evident that the base space 
of the versal deformation of $L_{n+1}^n$ is isomorphic to the
total space for $L_{n}^{n-1}$, if $n\geq 5$.
By induction it follows  that the base space
is irreducible and Gorenstein. 
We discuss the known connection to a modular compactification of the moduli space  
of $(n+1)$-pointed curves of genus 1.

For other elliptic partition curves it seems unfeasable to compute the versal deformation
in general. It is doubtful whether the base space is Gorenstein. For rational partition curves
we show that the base space in general has components of different dimensions.
\end{abstract}

\subjclass[2020]{14B05 14H10 32S05}
\keywords{versal deformation, moduli of curves, partition curve, elliptic $m$-fold points}
\thanks{}
\maketitle

\section*{Introduction}
An explicit description of the versal deformation of a whole family of
singularities is only possible if the equations have a high degree of symmetry.
An example is the case of $n$ lines through the origin in the form 
of the coordinate axes in $\A^n$. The resulting equations are very simple. The ones given in \cite{Sch} are 
due to D. S. Rim, but the  
computation was done independently by various
authors \cite{Al,FP}. 
Here we consider the next case, 
the curve $L_{n+1}^n$
consisting of $n+1$  lines in generic position through the origin  in 
$\A^{n}(k)$, obtained by adding a line to the coordinate axes; in the terminology of
Smyth \cite{SmI} this an elliptic $m$-fold point, with $m=n+1$ the multiplicity. We assume that
$k$ is an algebraically closed 
field of characteristic zero.

As an elliptic $(n+1)$-fold point is quasihomogeneous, and has only deformations of negative
weight \cite{Pi}, its versal deformation can be fibrewise projectivised. Its base space $B_n$ is then
a fine moduli space for reduced projective  Gorenstein curves of arithmetic genus one
with a hyperplane section defined by a specific function $t$; this is a special case
of a general result of Looijenga \cite{Lo}. The projective scheme $\P(B_n)$ is a 
compactification  of the moduli space $M_{1,n+1}$ of $(n+1)$-pointed
curves of genus 1. It is isomorphic to the compactification 
$\olsi{M}_{1,n+1}(n)$ constructed by Smyth \cite{Sm}. This is proved by 
Lekili and Polishchuk \cite{LP}, who construct the projectivized family by computing the
coordinate ring of the fibres. In their equations the first two coordinates have a special role,
leading to a less symmetric form.
The connection to the versal deformation of $L_{n+1}^n$ is not made explicitly.

Our symmetric equations were originally obtained in a project with Theo de Jong to extend
the results of \cite{dJvS} on rational surface singularities with reduced fundamental cycle
(that is, with $L_n^n$ as general hyperplane section) to the case of minimally elliptic
singularities with reduced fundamental cycle.
The equations for $L_{n+1}^n$ and its deformations are very similar to those
for $L^n_n$ (the coordinate axes), but a bit more complicated. Whereas
for $L^n_n$ the equations are just $z_iz_j=0$ for $i<j$ we now get the equations
$z_iz_j=z_kz_l$, which as written do not provide a minimal system of generators of the ideal
of the curve. To obtain such a system the symmetry of the equations has to be broken.
Instead of using the full system of equations we introduce a new variable $y$
of  weight 2 and get $\binom n2$ equations $y=z_iz_j$. 
This simplifies the computation of the infinitesimal deformations.
For the versal deformation we end up with equations in terms of the original
full system. Due to the symmetry it suffices
to write down only one equation for the total space and one equation
for the base space.
The obtained equations are again similar to the equations for the versal deformation of the
coordinate axes $L_n^n$, but  a bit more involved. 

In the case at hand it is possible to analyse the equations, as they turn out to have
an inductive structure; in fact, due to the symmetry in several ways.
The base space 
of the versal deformation of $L_{n+1}^n$ is isomorphic to the
total space of the versal deformation of $L_{n}^{n-1}$, if $n\geq 5$. An equivalent
observation is made by Lekili and Polishchuk \cite{LP}, and used to study the base space.

\begin{thmstar}
The base space of the versal deformation of $L_{n+1}^n$ is Gorenstein and irreducible.
\end{thmstar}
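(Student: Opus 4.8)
The plan hinges on the inductive structure already flagged in the introduction: the base space $B_n$ of the versal deformation of $L_{n+1}^n$ is isomorphic to the \emph{total space} of the versal deformation of $L_n^{n-1}$ for $n\geq 5$. This is the engine of the whole argument, so the first task is to establish this isomorphism rigorously from the explicit symmetric equations. Concretely, I would write down the single defining equation for the total space and the single defining equation for the base space (exploiting the $S_{n+1}$-symmetry so that one representative equation suffices), and then exhibit an explicit identification of coordinate rings: the deformation parameters and the function $t$ governing the base of $L_{n+1}^n$ should match, variable-for-variable, the total-space coordinates (original variables plus deformation parameters) of $L_n^{n-1}$.

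Granting that isomorphism, the proof of both claimed properties becomes a clean induction on $n$. The plan is to set up the base case at $n=5$ (or wherever the symmetric-equation description first stabilizes), checking irreducibility and Gorensteinness there directly from the equations — these are low-dimensional and can be verified by hand or by inspection of the explicit hypersurface/complete-intersection structure. For the inductive step I would argue as follows. \emph{Irreducibility}: the total space of a versal deformation of a reduced curve singularity is irreducible provided its special fibre sits inside it as a Cartier (or at least well-behaved) divisor and the generic fibre is smooth; more robustly, the total space of the versal deformation of $L_n^{n-1}$ is irreducible because it is a flat family over a base whose generic fibre is smooth and connected, so the total space is a cone-like irreducible variety. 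Thus $B_n\cong(\text{total space of }L_n^{n-1})$ inherits irreducibility. \emph{Gorenstein}: here I would use that $L_{n+1}^n$ is itself Gorenstein (it is an elliptic $m$-fold point, hence Gorenstein of arithmetic genus one), that the total space of a flat Gorenstein deformation is Gorenstein, and that the Gorenstein property passes between base and total space under the isomorphism. More precisely, the total space of $L_n^{n-1}$ is Gorenstein because $L_n^{n-1}$ is Gorenstein and flatness preserves the Gorenstein property fibrewise; transporting this across the isomorphism gives $B_n$ Gorenstein.

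The main obstacle, and the step I expect to absorb most of the real work, is verifying the total-space-to-base isomorphism cleanly enough that the two ring-theoretic properties transfer without subtlety. The difficulty is twofold. First, the isomorphism $B_n\cong(\text{total space of }L_n^{n-1})$ is asserted only for $n\geq 5$, so I must confirm that the symmetric equations genuinely specialize to this pattern and pin down exactly why $n\geq 5$ is required (lower $n$ presumably have extra components or non-versal phenomena, and the base case must be handled separately). Second, "total space is Gorenstein" is not automatic from "special fibre is Gorenstein" — it requires that the versal family be flat with Gorenstein fibres and that the base itself be nice enough; I would lean on the quasihomogeneity of the elliptic $m$-fold point and the negative-weight condition from \cite{Pi} to guarantee that the relevant dualizing sheaves behave well and that no obstruction spoils the Gorenstein property in the total space.

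Finally, to close the induction I would remark that irreducibility and the Gorenstein property are both statements that descend along the chain $B_n\cong T(L_n^{n-1})\to$ (fibred over $B_{n-1}$), so that once the base case is secured and the isomorphism is in hand, a single downward induction settles all $n\geq 5$ simultaneously. The cleanest packaging is to prove by induction the combined statement that \emph{the total space of the versal deformation of $L_{n+1}^n$ is irreducible and Gorenstein}, from which the corresponding statement for the base $B_n$ follows immediately via the isomorphism, giving the theorem.
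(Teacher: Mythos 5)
Your overall strategy coincides with the paper's: establish the isomorphism between $B_n$ and the total space of the versal deformation of $L_n^{n-1}$ from the explicit symmetric equations (this is Theorem \ref{thm:basetotal}, proved by substituting $z_i=a_{in}$ and counting the resulting equations against $\dim T^2$), and then run an induction on $n$ with a smooth base case ($B_4$ is smooth, since $T^2=0$ there). However, your justification of the Gorenstein step contains a genuine error. The principle you invoke --- ``the total space of a flat Gorenstein deformation is Gorenstein'', or ``flatness preserves the Gorenstein property fibrewise'' --- is false: the trivial deformation $X\times S\to S$ of a Gorenstein singularity $X$ over a non-Gorenstein base $S$ is flat with all fibres Gorenstein, yet its total space is not Gorenstein. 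You do sense the problem (``the base itself must be nice enough''), but the repair you propose --- quasihomogeneity, negative weights, good behaviour of dualizing sheaves --- is not the relevant tool and would not close the gap. What is needed is the standard criterion for flat local morphisms \cite[Thm.~23.4]{Mat}: for a flat local morphism of Noetherian local rings, the total ring is Gorenstein if and only if the base and the closed fibre are both Gorenstein. With this in hand your ``combined induction'' works and becomes exactly the paper's proof: the closed fibre $L_n^{n-1}$ is Gorenstein (elliptic partition curves are Gorenstein), the base $B_{n-1}$ is Gorenstein by the induction hypothesis, hence the total space, i.e.\ $B_n$, is Gorenstein. So the gap is fixable inside your framework, but the single fact carrying the whole Gorenstein argument is precisely the one you did not identify, and the substitute reasoning offered in its place is incorrect.

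For irreducibility your route genuinely differs from the paper's. The paper argues: Gorenstein $\Rightarrow$ Cohen--Macaulay $\Rightarrow$ all components have the same dimension; by Greuel's classification \cite[Prop.~3.6]{Gr} nearby fibres have only elliptic $m$-fold points and nodes, so $B_n$ is smooth in codimension $6$; Serre's criterion then gives normality, hence a single component. Your argument instead deduces irreducibility of the total space from irreducibility of the base (induction hypothesis) plus generic smoothness of the fibres. This can be made to work, but three points you gloss over must be checked: (a) smoothness of the generic fibre, which does follow from smoothability of $L_n^{n-1}$ together with irreducibility of $B_{n-1}$; (b) connectedness of the generic fibre, which is not free for an affine family and needs an argument (e.g.\ constancy of $h^0(\mathcal{O})$ in the fibrewise projectivisation, or connectedness of Milnor fibres of smoothings of reduced curve singularities); and (c) that every irreducible component of the total space dominates the base, which follows from flatness only because the base has no embedded points --- a fact supplied by the Gorenstein half of the induction hypothesis. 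In particular the two properties cannot be decoupled in your scheme: they must be carried along in a single induction, with the Gorenstein step repaired as above.
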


The curve $L_{n+1}^n$ is the simplest elliptic partition curve 
of embedding dimension $n$. For all such curves the dimension 
of $T^1$ and $T^2$ is known \cite{BC}. The other extreme is the
Gorenstein monomial curve of minimal multiplicity, the
one with semigroup generated by $\langle n+1,n+2,\dots,2n\rangle$.
Its equations have less symmetry, and a general computation  of the 
versal deformation is 
out of reach. For $n=6$  the calculation is still doable, but the resulting equations
are too long to be given here. The situation can again be compared with
rational partition curves. For $L_n^n$ the base space is Cohen-Macaulay
\cite{PR}, with rather simple equations. They become more involved
if one starts from the  general hyperplane section of the cone over the rational normal 
curve of degree $n$ (with its standard toric equations). We prove here that the 
monomial curve with semigroup $\langle n, n+1,\dots,2n-1\rangle$ deforms into
non-smoothable singularities, if $n\geq 14$. In particular, its base space has components
of different dimensions. A similar elementary explicit deformation cannot be given for the
Gorenstein monomial curve, but we see no reason preventing the base space to be reducible.

\section{Gorenstein curve singularities with minimal $\delta$-invariant}
The singularity $L_{n+1}^n$  of $n+1$ lines through the origin in $\A^n$ 
in generic position is an elliptic partition curve \cite{BC}. We review the classification 
of these curves.

Let $(X,p)$ be a curve singularity with $r$ branches and let   
$\nu\colon \widetilde X \to X$ be the normalisation, with 
$\nu^{-1}(p)=\{p_1,\dots,p_r\}$. Denote the completion of the
local ring of $X$ at $p$ by $\widehat {\mathcal O}_X$ and the semi-local ring
$\oplus _{i=1}^r \widehat{\mathcal O}_{\widetilde X,p_i}$ by 
$\widehat {\mathcal O}_{\widetilde X}$. The \textit{$\delta$- invariant}
of $(X,p)$ is $\delta(X)=\dim_k \widehat {\mathcal O}_{\widetilde X}/
\widehat {\mathcal O}_X$.  For curves $X=X_1\cup X_2\subset \A^N$ with singular point 
$p$,
where the $X_i$ may be reducible,
$\delta (X) = \delta(X_1)+\delta(X_2)+(X_1\cdot X_2)$; here the intersection multiplicity
$(X_1\cdot X_2)$ is 
$\dim \widehat {\mathcal O}_{\A^N}/(\widehat I_1+\widehat I_2)$ with 
$\widehat I_1$ and $\widehat I_2$ the ideals of $X_1$ and $X_2$ in the completed
local ring of $\A^N$ at $p$. 

The Milnor number $\mu(X)$  has been defined by Buchweitz and Greuel and it is equal to 
$2\delta-r+1$ \cite[Prop. 1.2.1]{BG}. For a smoothable curve singularity
over the complex numbers the Milnor number is equal to the first Betti number of
the Milnor fibre \cite[Cor. 4.2.3]{BG}, and the genus of the Milnor fibre
is equal to $\delta-r+1$. Therefore we define for all curve singularities the genus
as $g(X)=\delta-r+1$. We have 
\begin{equation}\label{eq:genus-of-union}
g(X_1\cup X_2)=g(X_1)+g(X_2)+(X_1\cdot X_2)-1\;.
\end{equation}

A curve singularity $X$ is \textit{decomposable} (into
$X_1$ and $X_2$), if $X$ is the union of two curves $X_1$ and $X_2$, which 
lie in smooth spaces intersecting each other transversally in the
singular point of $X$. We write $X=X_1 \vee X_2$.
With the exception of an ordinary double point 
a decomposable curve  is never Gorenstein.

Curves singularities with minimal $\delta$-invariant occur in the literature
under several names (see e.g. \cite{St}). Here we call them partition curves
following \cite{BC1}.  They are wedges of monomial curves. Let $X_n$ be the
monomial curve with semigroup generated by $\langle n,n+1,\dots,2n-1\rangle$.
For a partition $\boldsymbol{p}=(n_1,\dots,n_r)$ of $n$ we define
\[
X_{\boldsymbol{p}} = X_{n_1}\vee \dots \vee X_{n_r}\;.
\]
In particular $X_{(n)}=X_n$, and $X_{(1,\dots,1)}=L_n^n$, the singularity
consisting of the coordinate axes. We include the smooth point $L_1^1$, which has
$\delta= 0$.
Partition curves are exactly the curves of multiplicity $m$ equal to the
embedding dimension $n$ with $\delta = m-1 = n-1$. Given $n$, they have  
minimal multiplicity and minimal $\delta$. They occur as hyperplane sections of rational singularities.

For an isolated Gorenstein curve singularity $Y$ of embedding dimension
$n$,  $n\geq3$, the delta invariant has value at least $n + 1$. 
Gorenstein curve singularities  with $\delta= n + 1$, $n\geq 2$, are classified
in \cite{BC} and are called elliptic partition curves. The term elliptic in the name is explained by the fact
that such curves occur as hyperplane sections of minimally elliptic
singularities.
The easiest description is as follows: given a partition 
$\boldsymbol{p}=(n_1,\dots,n_r)$ of $n+1$, the elliptic partition curve
$Y_{\boldsymbol{p}} \subset \A^n$ is the curve obtained by a generic linear
projection of  the partition curve
$X_{\boldsymbol{p}}   \subset \A^{n+1}$.  In particular $Y_{(n+1)}$ is the
monomial curve generated by  $\langle n+1,n+2,\dots,2n\rangle$, and 
$Y_{(1,\dots,1)}=L^n_{n+1}$,  the curve consisting of $n+1$ lines through
the origin in generic position. The generic projection might not respect the grading,
but elliptic partition curves have no moduli, and there exists a
quasi-homogeneous representative. 

\begin{remark}
The elliptic partition curves (with $n\geq 2$) have also minimal multiplicity $m=n+1$.
So they are the curves with minimal $\delta=m$. 
This view point allows to extend the classification to $n+1=m=2$. There are two Gorenstein
curves with $m=\delta=2$: the tacnode $A_3$ corresponding to the partition $(1,1)$
and the ramphoid cusp $A_4$ corresponding to $(2)$. 
Because $\dim \mathcal{O}_X/\mathcal{O}_Y = 1$ for the generic projection $X\to Y$,
 the curves $A_3$ and $A_4$ can be
considered to be the projections of $A_1$ and $A_2$. We remark that $A_1$ and $A_2$
are the first blow-ups of $A_3$ and $A_4$. The first blow-up of $A_2$ is a smooth curve,
so $A_2$ can be considered to belong to the partition $(1)$, for $n=0$.
With these conventions \cite[Prop. 4.1.1]{BC} continues to hold for $n=0,1$:
the  general hypersurface section
of a minimally elliptic surface singularity with $-Z\cdot Z=n+1$ (where $Z$ is the 
fundamental cycle) is an elliptic partition curve for a partition of $n+1$.
\end{remark}

The curves $L_{n+1}^n$ are also elliptic in the sense that they satisfy
$g=\delta-r+1=1$, so $\delta=r$. The classification of curves singularities
with $g=1$ is due to Greuel \cite{Gr0,Gr}. By \eqref{eq:genus-of-union}
such a curve is of the form $X\vee L_s^s$ with $X$ indecomposable of genus 1.
For indecomposable $X$ with $r\geq 2$  all branches are smooth
and by removing one branch a curve  $L_{r-1}^{r-1}$ is left. Therefore
such an $X$ is an elliptic partition curve belonging to the partition $(1,\dots,1)$ of $n+1$,
including the cases $n=0,1$, and consequently it is Gorenstein.  
These curves are called elliptic $m$-fold points by Smyth \cite{SmI}, who comes to the
same classification in arbitrary characteristic.

Behnke and Christophersen have determined the dimension of $T^1$ and $T^2$
for all elliptic partition curves \cite[Prop. 5.4.1]{BC}:
\begin{prop}
 For an elliptic partition curve $Y$ of multiplicity $n + 1$, where $n\geq 4$, with $r$ branches
 \begin{align*}
\dim T^1_Y &= \frac{n(n+1)}2-r+1\;, \\
\dim T^2 &= \frac{n(n+1)(n-4)}6\;.
\end{align*}
Furthermore $T^2_Y$ is annihilated by the maximal
ideal of the local ring.
\end{prop}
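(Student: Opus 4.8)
The plan is to work with the quasi-homogeneous representative provided above, so that $A=\mathcal O_Y$ is a graded $k$-algebra with an isolated singularity and the cotangent cohomology modules $T^1=T^1_Y$ and $T^2=T^2_Y$ are finite-dimensional \emph{graded} vector spaces. I would compute them from the Lichtenbaum--Schlessinger complex of an explicit presentation $A=R/I$ with $R=k[x_1,\dots,x_n]$, graded by the weights of the $\G_m$-action, and read off the Hilbert function in each weight. Two structural facts keep this finite and organised. First, by \cite{Pi} an elliptic $(n+1)$-fold point admits only deformations of negative weight, so $T^1$ is supported in finitely many negative degrees. Second, I expect $T^2$ to be concentrated in a single weight; proving that concentration is precisely the assertion $\mathfrak m\cdot T^2=0$, and it reduces the dimension count for $T^2$ to one graded piece.

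For $T^1$ I would use the normal-module description
\[
T^1=\operatorname{coker}\bigl(\operatorname{Der}_k(R)\otimes_R A\longrightarrow\operatorname{Hom}_A(I/I^2,A)\bigr)
\]
and evaluate it through the inclusion $\mathcal O_Y\subset\overline{\mathcal O}=\bigoplus_{i=1}^{r}k[[t_i]]$ of the local ring into its normalisation, where the $i$-th branch contributes $k+t_i^{n_i}k[[t_i]]$ and the branches are glued at the origin. The anchoring case is $r=1$, the Gorenstein monomial curve with symmetric semigroup $\langle n+1,\dots,2n\rangle$, where the graded normal module is governed purely by the semigroup and one finds $\dim T^1=\tfrac{n(n+1)}2$. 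I would then analyse how the gluing data enters the low-weight part of the normal module and show that it does so linearly in the number of branches, each extra branch lowering the count by exactly one. This produces the correction $-r+1$ and, in particular, shows that $\dim T^1$ depends on the partition only through $r$.

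For $T^2$ I would compute
\[
T^2=\operatorname{coker}\bigl(\operatorname{Hom}_A(R^{m},A)\longrightarrow\operatorname{Hom}_A(\mathrm{Rel}/\mathrm{Kos},A)\bigr),
\]
the module of relations modulo Koszul relations among a chosen generating set $f_1,\dots,f_m$ of $I$, again sorted by weight. Here I would establish the single-weight concentration --- giving $\mathfrak m\,T^2=0$ --- and match the resulting dimension with $\tfrac{n(n+1)(n-4)}6$. The vanishing at $n=4$ is a useful check: it predicts $T^2=0$, so that the multiplicity-$5$ curves are unobstructed.

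The real obstacle is that $\dim T^2$ must come out independent of the partition, even though the local rings range from a single cusp $Y_{(n+1)}$ to $n+1$ general lines $L_{n+1}^n$. Rather than a uniform computation I would use a semicontinuity squeeze. The refinement poset of partitions of $n+1$ is realised by deformations: every $Y_{\boldsymbol p}$ occurs as a generic fibre of a flat family degenerating to $Y_{(n+1)}$, and $L_{n+1}^n$ occurs as a generic fibre of a flat family degenerating to $Y_{\boldsymbol p}$. Upper semicontinuity of $\dim T^2$ then gives
\[
\dim T^2(L_{n+1}^n)\ \le\ \dim T^2(Y_{\boldsymbol p})\ \le\ \dim T^2(Y_{(n+1)}),
\]
so it suffices to compute the two extremes --- the monomial curve from its symmetric semigroup and $L_{n+1}^n$ from the symmetric equations studied in the rest of this paper --- and to check that they agree. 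Setting up these specialisation families cleanly and carrying out the single-weight $T^2$ count at both ends is where I expect most of the work to lie; the exact linear-in-$r$ value of $\dim T^1$, by contrast, has to come from the normal-module analysis above, since the squeeze only bounds it.
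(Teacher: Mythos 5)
A preliminary remark on the target: the paper does not prove this proposition at all --- it quotes it from Behnke--Christophersen \cite[Prop.~5.4.1]{BC}. Your proposal must therefore be judged as a self-contained argument, and by that standard it has a genuine gap: it is an architecture, not a proof. Every quantitative assertion of the proposition is deferred. You never compute $T^1$ or $T^2$ for the monomial curve $Y_{(n+1)}$ (``governed purely by the semigroup'' is not a computation, and this is exactly the curve for which the paper reports that explicit calculations become unmanageable); you never compute $T^2$ for $L_{n+1}^n$ (which requires the resolution of its ideal, i.e.\ essentially the relations $R_{ik;jl}$ of Section~3); and the claim that refining the partition by one extra branch lowers $\dim T^1$ by exactly one is stated without any mechanism --- this is precisely the content of the $-r+1$ term, and, as you note yourself, it cannot come from the semicontinuity squeeze. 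Since the formulas to be proved \emph{are} these computations, the proposal establishes none of the three assertions.

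The reductions you do propose are sensible in outline but also need repair. Upper semicontinuity of $\dim T^2$ in flat families of isolated curve singularities is true but not free: fibrewise $T^2$ does not trivially commute with base change, so one must argue via the relative cotangent modules or cite an appropriate result. The adjacency with special fibre $Y_{\boldsymbol p}$ and generic fibre $L_{n+1}^n$ is asserted, not constructed; it does follow from the explicit family deforming $Y_{(n+1)}$ into all elliptic partition curves (restrict that family, whose fibre type is governed by the factorisation of a binary form, to a generic arc through a point of type $\boldsymbol p$), but that family and its flatness (e.g.\ by $\delta$-constancy) are themselves things you would have to exhibit. Finally, concentration of $T^2$ in a single graded degree \emph{implies} $\mathfrak m\,T^2=0$ but is not equivalent to it, so ``proving that concentration is precisely the assertion'' overstates the logical relation; and the concentration statement is again a computation you have not made. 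In short: the squeeze is a legitimate way to propagate the value of $\dim T^2$ from the two extreme curves to all partitions, but with both extreme computations and the whole $T^1$ analysis left open, the core of the proposition remains unproved.
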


For $n=2$ the curve is a plane curve and for $n=3$ it is a complete intersection,
so the formulas do not extend to these cases.

For the special case of  $L_{n+1}^n$ it had already been shown 
by Pinkham \cite{Pi} that $T^1$ is negatively graded, and Greuel \cite{Gr}
had computed  its dimension:  $T^1$ is concentrated in degree $-1$, if $n\geq 4$,
and has dimension $\binom n2$. 

\section{Deformations of negative weight}
By a result of Pinkham \cite{Pi} the curve  $L_{n+1}^n$ has only deformations of negative weight.  
This allows to  take 
the projective closure of the fibres of the versal family  and obtain  in this way 
also the versal deformation of the projective cone over
$n+1$ points in $\P^{n-1}$ in general position. 
We recall the general construction, following Looijenga, who makes Pinkham's results
\cite{Pi,Pi2}
more precise  in the
Appendix of \cite{Lo}.

Let $X\subset \A^n$ be an isolated  quasi-homogeneous singularity, 
over a fixed algebraically closed field  $k$ of characteristic 0.
This means that the ideal of $X$ is generated by quasi-homogeneous
polynomials. Then 
$X=\Spec R$, where $R=\oplus _{l=0}^\infty R_l$ is a 
reduced $\Z_+$-graded $k$-algebra, with $R_0=k$. The grading on $R$ corresponds
to a $\G_m$-action on $X$, defined by $\lambda \cdot \vp = \lambda^l \vp$ for 
$\lambda \in \G_m$ and $\vp\in R_l$. As $R_0=k$ and $R_l=0$ for $l<0$, this action is good,
meaning that the unique fixed point, defined by the maximal ideal 
$R_+=\oplus _{l=1}^\infty R_l$, is in the closure of every orbit.
The standard projectivisation  $X\subset \olsi X$ is defined as follows:
let $\olsi R_k=R_0\oplus\dots\oplus R_k$, then $\olsi R:= 
\oplus _{k=0}^\infty \olsi R_k$ is a graded $\Z_+$-algebra and 
$\olsi X = \Proj \olsi R$.  If $t=(1,0)\in \olsi R_1=
R_0\oplus R_1$, then $\olsi R = R[t]$  and $X$ becomes a subscheme of $\olsi X$ by
making $t$ invertible. The complement $\Proj R=  \olsi X_\infty=\olsi X \setminus X $ 
is the divisor defined by $t$.

A deformation $(\pi,i)$, where  $\pi \colon (\mathcal X,X_s)\to (S,s)$ and $i\colon X\cong X_s$,
has (good) $\G_m$-action if both  $(\mathcal X,X_s)$ and $(S,s)$ have 
(good) $\G_m$-actions making  $\pi$ and $i$  $\G_m$-equivariant. In case of a good 
 $\G_m$-action the (formal) schemes $(\mathcal X,X_s)$ and $(S,s)$ can be taken to be
 affine schemes $\mathcal X = \Spec \mathcal R$ and $S=\Spec A$. 
 By the same construction as for $X$ the deformation can be fibrewise projectivised.
Put $\olsi{\mathcal R}_k= A \mathcal R_0\oplus \dots
 A \mathcal R_k$ and define $\olsi{\mathcal X}=\Proj_A\olsi{\mathcal R}$.
 If $t=(1,0)\in \olsi{\mathcal R}_1$ then $\olsi{\mathcal R}/t\olsi{\mathcal R}$
 is naturally isomorphic to $A\otimes_k R$ and $\olsi{\mathcal X}_\infty$ is
 naturally isomorphic to $\olsi{X}_\infty\times S$. 
 
 By \cite{Pi2} there exists a miniversal object
 $(\pi,i)$ for deformations of $X$ with  $\G_m$-action, which also induces
 a miniversal deformation of the isolated singularity $(X,p)$. The part $\pi_-$
of $\pi$ of negative weight is miniversal for deformations of $X$
with good  $\G_m$-action. Looijenga proves that $\pi_-$
is actually universal for this property \cite[Thm A.2]{Lo}:

\begin{thm}
Let $X$ be a reduced affine scheme with good $\G_m$-action with as only singular point
the vertex. The negative weight part $\pi_-$ of the miniversal deformation of $X$
is a final object in the category of deformations with good $\G_m$-action. The group 
$G$ of automorphisms of $X$ commuting with the  $\G_m$-action acts on  $\pi_-$ 
and its  projectivisation $\bar{\pi}_-$. Any isomorphism between 
two fibres of the $\bar{\pi}_-$ preserving  the
pieces  at  infinity is induced by a unique element of $G$.
\end{thm}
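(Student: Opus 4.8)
Since this is Looijenga's theorem, the plan is to reconstruct the argument from Pinkham's equivariant deformation theory by careful bookkeeping of $\G_m$-weights. Throughout write $X=\Spec R$ with $R$ positively graded, let $T^1=\bigoplus_\nu T^1_\nu$ be its graded space of first-order deformations, and normalise weights so that a base coordinate dual to a class in $T^1_\nu$ carries weight $-\nu$: this is forced by requiring the total space of the miniversal $\pi$ to be $\G_m$-invariant, since a term $t\,g$ with $g\in T^1_\nu$ is invariant exactly when $\mathrm{wt}(t)+\nu=0$. With this convention a deformation has \emph{good} $\G_m$-action precisely when its base is positively graded, i.e. all base coordinates have strictly positive weight, and these correspond to the classes of \emph{negative} weight in $T^1$. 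The subspace $S_-\subset S$ cut out by the vanishing of all coordinates of non-positive weight is then exactly the locus carrying the good action, and $\pi_-$ is the restriction of $\pi$ to $S_-$.

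First I would establish the existence of a morphism $\rho\to\pi_-$ for every good-$\G_m$-deformation $\rho\colon(\mathcal Y,X)\to(T,s)$. By Pinkham's equivariant versality one obtains a $\G_m$-equivariant base change $f\colon T\to S$ with $\rho\cong f^{*}\pi$. Equivariance makes $f^{*}\colon\mathcal O_S\to\mathcal O_T$ a graded local homomorphism; since the good action on $T$ forces $\mathfrak m_T$ to be concentrated in strictly positive weights, every coordinate of $S$ of weight $\le 0$ (that is, every class in $T^1_\nu$ with $\nu\ge 0$) must map to $0$. Hence $f$ factors through $S_-$, producing the desired morphism. The projectivised family $\bar\pi_-$ is then obtained fibrewise by the standard construction recalled above, with $\olsi{\mathcal X}_\infty\cong\olsi X_\infty\times S_-$.

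The heart of the matter, and the step I expect to be the main obstacle, is \emph{uniqueness} of the base map $f$, which upgrades versality to finality. On tangent spaces $f$ is pinned down by the Kodaira--Spencer class of $\rho$, which is canonical because $\pi$ is miniversal; the problem is to propagate this to all weights. I would argue by induction on the weight: if two equivariant base changes $f,g\colon T\to S_-$ inducing isomorphic families first differ in weight $w$, their difference defines, modulo products of lower-weight elements, a map out of the weight-$w$ indecomposables of $\mathcal O_{S_-}$, i.e. a class in $T^1_{-w}\otimes(\mathcal O_T)_w$. The ambiguity coming from automorphisms of the family is measured by $\mathrm{Der}(R)\otimes(\mathcal O_T)_w$ in total weight $0$, hence by \emph{negative}-weight derivations; such derivations move points along the fibres but leave the base coordinates fixed, so they cannot account for a nonzero discrepancy in $f$. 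Since genuine first-order deformation classes inject into $T^1$, the weight-$w$ discrepancy must vanish, giving $f=g$ and hence finality. The delicate point, which forces the good-action hypothesis, is exactly that the positivity of the weights on $\mathcal O_T$ confines the automorphism ambiguity to the range that does not perturb the base.

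Finally I would derive the $G$-action and the rigidity assertion. The group $G=\mathrm{Aut}^{\G_m}(X)$ acts by transport of structure: composing a good-$\G_m$-deformation with $g\in G$ on the special fibre yields another such deformation, and finality returns a unique automorphism of $\pi_-$ (and of $\bar\pi_-$), giving the action. For the last claim, let $\psi$ be an isomorphism between two fibres of $\bar\pi_-$ preserving the divisors at infinity. Since $\olsi{\mathcal X}_\infty\cong\olsi X_\infty\times S_-$, preserving infinity means $\psi$ respects the cone structure, so after removing infinity it is a $\G_m$-equivariant isomorphism of the affine fibres $X_s\to X_{s'}$. Using the contracting $\G_m$-action one spreads this out: restricting $\bar\pi_-$ over the orbit closures of $s$ and $s'$ gives two good-$\G_m$-deformations of $X$ over the germ of $\A^1$, both with special fibre $X$, and $\psi$ together with equivariance extends to an isomorphism between them over the origin. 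Applying the finality proved above, the two base maps into $S_-$ coincide up to a unique element of $G$, and the induced isomorphism on the central fibre is that element; tracking back identifies $\psi$ with the action of a unique $g\in G$. Here the subtle ingredient is the recovery of equivariance from the mere preservation of the pieces at infinity, which is what ties the isomorphism of fibres back to the universal family.
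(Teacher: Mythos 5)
The paper never proves this statement: it is quoted verbatim from Looijenga's appendix (\cite[Thm.~A.2]{Lo}, with \cite[Lemma~A.4]{Lo} invoked right after), so your reconstruction has to be judged against Looijenga's argument rather than anything in the text. Your first two steps are sound: the weight normalisation, and the factorisation of Pinkham's equivariant classifying map through $S_-$ because $\mathfrak{m}_T$ is concentrated in strictly positive weights. The finality step, however, which you yourself flag as the heart of the matter, is settled by a circular remark. You dismiss the automorphism ambiguity on the grounds that negative-weight derivations ``move points along the fibres but leave the base coordinates fixed, so they cannot account for a nonzero discrepancy in $f$.'' That is exactly backwards: two distinct classifying maps inducing isomorphic families differ precisely through the action of infinitesimal automorphisms of the family on the classifying data (this is the standard mechanism behind the non-uniqueness of versal base maps), and in the equivariant setting the dangerous ones are negative-weight derivations of $R$ paired with positive-weight functions on $T$. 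Nothing in your sketch rules these out; some input from the hypothesis that the vertex is actually singular is required. The example $X=\A^n$ with all weights $1$ shows the threat is real: $\partial/\partial z_i$ has weight $-1$, and correspondingly translations are automorphisms of $\P^n$ preserving the hyperplane at infinity which are induced by no graded automorphism, so the conclusion of the theorem genuinely uses more than weight bookkeeping.

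The rigidity step contains an outright error. For $s\neq 0$ the fibre $\olsi X_s$ carries no $\G_m$-action (the action moves $s$ along its orbit), so your claim that preserving infinity makes $\psi$ ``a $\G_m$-equivariant isomorphism of the affine fibres'' is false; preservation of the divisor at infinity only says that $\psi^*$ respects the filtration of the affine fibre ring by pole order along infinity. The mechanism your sketch replaces by the unsupported assertion ``$\psi$ together with equivariance extends to an isomorphism between them over the origin'' is a limit argument: conjugate, $\Psi_\lambda=\lambda\circ\psi\circ\lambda^{-1}$, to get isomorphisms between the fibres over the $\G_m$-orbits of $s$ and $s'$, and show this family extends across $\lambda=0$; the limit is the associated graded map of $\psi^*$ for the pole-order filtration, and it lands in $G$ exactly because the deformation has negative weight, so that the associated graded of each fibre ring is $R$ itself. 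Proving this extension, then that $\psi$ agrees with the deformation-theoretic action of its own limit, and finally the uniqueness of the element of $G$ (where \cite[Lemma~A.4]{Lo} enters) is the actual content of the last assertion; as written, your argument assumes what it needs to prove at precisely this point.
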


If the isomorphism $(\bar\sigma, \bar\sigma_\infty)$, induced by $g\in G$,
satisfies $\bar\sigma_\infty=1$, then $g\in \G_m$ \cite[Lemma A.4]{Lo}.


The morphism $\bar\pi_-$ has also a moduli interpretation. Looijenga defines an $R$-polarised
scheme as a triple $(\olsi Z, \olsi Z_\infty,\vp^*)$ consisting of a projective scheme
$\olsi Z$, an ample reduced Weil divisor $\olsi Z_\infty$ on $\olsi Z$ and an isomorphism
$\vp^*\colon R_{\olsi Z}/t  R_{\olsi Z} \to R$ of graded $k$-algebras, where
$R_{\olsi Z} =\bigoplus_{l=0}^\infty H^0(\mathcal O_{\olsi Z}(l \olsi Z_\infty))$ with 
$t\in  H^0(\mathcal O_{\olsi Z}(\olsi Z_\infty))$ the element corresponding to $1$ \cite[A.5]{Lo}.
He shows:

\begin{prop}
The morphism 
$\bar{\pi}_-\colon (\olsi {\mathcal X},\olsi{\mathcal X}_\infty)\to S_-$
is a fine moduli space for $R$-polarised schemes.
\end{prop}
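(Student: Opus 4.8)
The plan is to reduce the statement to the universality Theorem~A.2 by showing that the standard fibrewise projectivisation is an equivalence between deformations of $X$ with good $\G_m$-action and families of $R$-polarised schemes. First I would make the moduli functor precise: to a $k$-scheme $T$ it assigns the set of isomorphism classes of flat families $(\olsi{\mathcal Z}\to T,\ \olsi{\mathcal Z}_\infty,\ \vp^*)$ whose fibres are $R$-polarised schemes, and the assertion is that pulling back $(\olsi{\mathcal X},\olsi{\mathcal X}_\infty)\to S_-$ along morphisms $T\to S_-$ yields a natural bijection with this set. The proof then has two ingredients: Theorem~A.2, which already supplies a final object for the affine deformation problem, and the equivalence between the affine and the projectivised problems.

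The central construction goes in the direction opposite to projectivisation. Given such a family I would form the complement $\mathcal Z=\olsi{\mathcal Z}\setminus\olsi{\mathcal Z}_\infty$. Since $\olsi{\mathcal Z}_\infty$ is ample, $\mathcal Z$ is affine over $T$, with coordinate ring the degree-zero part of the localisation $R_{\olsi{\mathcal Z}}[t^{-1}]$; the grading of $R_{\olsi{\mathcal Z}}$ endows $\mathcal Z$ with a relative $\G_m$-action, and reducing modulo $t$ together with $\vp^*$ identifies the central object with $X=\Spec R$. Thus $\mathcal Z\to T$ is a deformation of $X$ with good $\G_m$-action, and by Theorem~A.2 there is a unique morphism $h\colon T\to S_-$ and a unique $\G_m$-isomorphism $\mathcal Z\cong h^*\mathcal X$.

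It remains to check that projectivisation and de-projectivisation are mutually inverse, so that $h$ together with an isomorphism of $R$-polarised families determine each other. Here I would use that for an affine scheme with good $\G_m$-action the standard projectivisation reconstructs the original projective scheme from its section ring, precisely because $\olsi{\mathcal Z}_\infty$ is ample: one has $\olsi{\mathcal Z}\cong\Proj R_{\olsi{\mathcal Z}}$, the divisor at infinity is recovered as the locus $t=0$, and reducedness of $\olsi{\mathcal Z}_\infty$ together with $\vp^*$ pins down the grading so that the reduction modulo $t$ is exactly $R$. Projectivising $h^*\mathcal X$ then returns $h^*(\olsi{\mathcal X},\olsi{\mathcal X}_\infty)$, canonically identified with $(\olsi{\mathcal Z},\olsi{\mathcal Z}_\infty,\vp^*)$. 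Uniqueness of $h$ is the final-object property; uniqueness of the isomorphism of polarised families follows from the automorphism statement recorded after Theorem~A.2, that an automorphism trivial at infinity lies in $\G_m$, which rigidifies the polarisation.

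The main obstacle I expect is exactly this last equivalence carried out for families rather than fibres: verifying that the complement of $\olsi{\mathcal Z}_\infty$ is affine over $T$ with a good relative $\G_m$-action, and that $\Proj$ of the relative section ring recovers $\olsi{\mathcal Z}$. This is where ampleness of the reduced Weil divisor $\olsi{\mathcal Z}_\infty$ is indispensable, and where one must check that every construction commutes with base change, so that one obtains genuine representability of the functor and not merely a fibrewise bijection.
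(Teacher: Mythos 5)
The paper does not actually prove this proposition: it is quoted directly from Looijenga \cite[A.5]{Lo} (``He shows:''), so your attempt has to be measured against Looijenga's argument. Your overall strategy --- convert a family of $R$-polarised schemes into an object of the category of deformations with good $\G_m$-action and then invoke the finality Theorem A.2 --- is indeed the right reduction, so you have the correct skeleton.

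But your de-projectivisation, the heart of the argument, is wrong, and it fails already when $T$ is a point. The coordinate ring of $\mathcal Z=\olsi{\mathcal Z}\setminus\olsi{\mathcal Z}_\infty$ is the degree-zero part $\bigl(R_{\olsi{\mathcal Z}}[t^{-1}]\bigr)_0$, and passing to degree zero destroys the grading: what survives is only the filtration $F_l=t^{-l}(R_{\olsi{\mathcal Z}})_l$, whose associated graded is $R$. So $\mathcal Z$ carries no natural $\G_m$-action, relative or otherwise --- if it did, every fibre of $\bar{\pi}_-$ would have a quasi-homogeneous affine part, which is false (for $L^n_{n+1}$ the generic affine part is a smooth genus-one curve minus $n+1$ points). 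For the same reason $\mathcal Z\to T$ is not a deformation of $X$: over an arbitrary $T$ no fibre need be isomorphic to $X$, and ``reducing modulo $t$'' does not pick out a fibre of $\mathcal Z\to T$ at all; it is a degeneration in a direction transverse to $T$. Note also that in Theorem A.2 the $\G_m$-action lives on base and total space simultaneously and moves fibres around; it is not a fibrewise action, so a family over a base $T$ carrying no action is simply not an object of the category in which that theorem operates, and it cannot produce the classifying map $h$ the way you invoke it (nor does ``a $\G_m$-isomorphism $\mathcal Z\cong h^*\mathcal X$'' parse, since $h^*\mathcal X$ has no action). The repair is to keep the whole graded algebra rather than the degree-zero part of its localisation: form the relative cone $\Spec_T R_{\olsi{\mathcal Z}}\to T\times\A^1$, where $R_{\olsi{\mathcal Z}}=\bigoplus_l \pi_*\mathcal O_{\olsi{\mathcal Z}}(l\olsi{\mathcal Z}_\infty)$ and $t$ is the coordinate on $\A^1$. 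The grading now genuinely defines a $\G_m$-action (weight $1$ on $\A^1$), flatness comes from $t$ being a nonzerodivisor, the fibre over $T\times\{0\}$ is the constant family $X\times T$ via $\vp^*$, and the classifying map $T\to S_-$ is the restriction to $T\times\{1\}$ of the resulting equivariant map. Even then the action on $T\times\A^1$ is good only in the fibre direction (its fixed locus is $T\times\{0\}$, not a point), so a relative form of the universal property is still required --- this is precisely the technical content of Looijenga's proof that your reduction elides, while the rigidity statement (Lemma A.4), which you do use, correctly supplies uniqueness of the isomorphism of polarised families.
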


We specialise to the case  $L_{n+1}^n$. This curve has only deformations of negative weight, so
 $\pi_-=\pi$. Eech  geometric fibre
of $\bar{\pi}$  is a reduced  Gorenstein curve of arithmetic genus
1 with $n+1$ marked points (the points at infinity). 

The approach of Lekili  and Polishchuk \cite{LP} to determining explicit equations for $\bar{\pi}$ 
in this case is to construct the ring $R_{\olsi Z}$. They start from 
 a reduced, connected projective curve $C$ of arithmetic genus 1 over an arbitrary 
 algebraically closed field 
with $n+1$ distinct smooth marked points $p_0,\dots,p_n$ such that 
$\mathcal O_C(p_0+\dots+p_n)$ is ample and $h^0(\mathcal O_C(p_i))=1$ for all $i$.
Generators of $R_C=\bigoplus_{l=0}^\infty H^0(\mathcal O_{C}(l(p_0+\dots+p_n))$
are $1$ and functions $h_{0i}\in H^0(\mathcal O_{C}(p_0+p_i))$ with 
$-\res_{p_i}(h_{0i}\omega)=\res_{p_0}(h_{0i}\omega)=1$ for a fixed generator $\omega$ of
$H^0(C,\omega_C)$.  Under certain normalisations they determine the ring structure.
In their equations the indices $1$ and $2$ have a special role.
This approach does not extend to the computation of the versal deformation of other 
elliptic partition curves.

\section{Computation of the versal deformation}
In this section we compute the versal deformation for $L_{n+1}^n$ for $n\geq 4$
using equations and relations.

\subsection{}
For Gorenstein singularities of minimal multiplicity $m$ it is known \cite{Wa}
in general
that the local ring has a free resolution with Betti numbers
\[
\beta_i = \frac{i(m-2-i)}{m-1} \binom{m}{i+1},
\qquad i=1,\dots,m-3\;,
\]
while $\beta_0= \beta_{m-2}=1$.

We consider $L_{n+1}^n$, with multiplicity $m=n+1$.
We can take $n$ of the lines to be the coordinate axes and as last line the
line through $(1,\dots,1)$. Then the ideal $I_0\subset P_0=k[z_1,\dots,z_n]$
of $C_n$ is minimally generated by $\frac{(n+1)(n-2)}2=\binom{n}2-1$ polynomials. 
We start from
a non-minimal system of generators
\[
f_{i,j;k,l} = z_iz_j-z_kz_l, \qquad i\neq j, k\neq l, \quad 1\leq i,j,k,l\leq n\;.
\]
A minimal system
can be chosen to consist  of the $F_{ij;1,2}=z_iz_j-z_1z_2$, $i<j$, 
but this choice gives the first two 
variables a special role.

To write symmetric equations and at the same time minimize the number
of equations it is convenient to embed the singularity in $\A^{n+1}$. We introduce
a new variable $y$ of weight 2 with $\A^{n}=\{y=0\}$ and after a
coordinate transformation, say replacing $y$ by $y-z_1z_2$, which transforms
the line through $(1,\dots,1)$ into a parabola, the ideal $I$ of the curve $C_n$
in $\A^{n+1}$ is generated by
the $\binom{n}2$ polynomials
\[
g_{ij}=z_iz_j-y, \qquad 1\leq i<j\leq n \;.
\]
Next we describe the relations between these generators.
Write $P=k[z_1,\dots,z_n,y]$ for the (graded) polynomial ring in $n+1$ variables.
Denote by $I_0$ the ideal in $P$ generated by the polynomials $f_{i,j;k,l} $.
There is an exact sequence
\[
0 \lra P/I_0 \mapright f P/I_0 \lra P/I \lra 0
\]
where $f=z_1z_2-y$.
Let $F_0$ be  a free resolution of $P_0/I_0$. The same matrices yield a free resolution $F$
of $P/I_0$.   One obtains a resolution of $P/I$
as a mapping cone of a homomorphism of complexes $F\to F$ which is a lift of
$f\colon P/I_0 \to P/I_0$. In particular the number of relations is
$(n+1)(n-2)/2+(n^2-1)(n-3)/3$. The first summand gives the number of 
Koszul relations, which we can ignore in deformation computations. The other 
relations are linear relations, that we now describe.

We start by forming $(n-2)\binom n2=3\binom n3$ expressions of the form $z_ig_{jk}$.
There are $\binom n3$ monomials $z_iz_jz_k$. As 
$z_ig_{jk}-z_jg_{ik}=(z_i-z_j)y$ and $z_ig_{jk}-z_kg_{ij}=(z_i-z_k)y$ we get
$n-1$ additional conditions on linear combinations of the $z_ig_{jk}$ to be
syzygies. This gives the required number of $2\binom n3 -(n-1)=(n^2-1)(n-3)/3$
linear independent relations of the form
\[
R_{ik;jl}= z_k(g_{ij}-g_{il})-z_i(g_{kj}-g_{kl})\;.
\]
This computation also shows that the relations between the polynomials
$f_{i,j;k,l}$ are generated by the linear dependencies and
\[
R_{ik;jl}= z_kf_{ij;il}-z_if_{kj;kl}\;.
\]

\subsection{}
We determine an explicit basis for  the module of infinitesimal deformations   
$T^1$, which by \cite{Gr} is concentrated in degree $-1$. 
We perturb the generators $g_{ij}$ to
\[
G_{ij}=z_iz_j-y+\sum a_{ij}^mz_m
\]
and insert these expressions in the relation $R_{ik;jl}$:
\begin{multline*}
z_k(G_{ij}-G_{il})-z_i(G_{kj}-G_{kl})\\=
z_k\left(\sum a_{ij}^mz_m -\sum a_{il}^mz_m\right)
      -z_i\left(\sum a_{kj}^mz_m -\sum a_{kl}^mz_m\right)\;.
\end{multline*}
The coefficient of $z_k^2$ is $a_{ij}^k- a_{il}^k$ and has to vanish. From this it
follows that $a_{ij}^k=a^k$ for $k\neq i,j$. Then 
$z_k(G_{ij}-G_{il})-z_i(G_{kj}-G_{kl})$ is modulo $I$ equal to
\begin{equation}\label{y-coef}
y\left((a_{ij}^i-a_{il}^i)-(a_{kj}^k-a_{kl}^k)+(a_{ij}^j-a_{kj}^j)-(a_{il}^l-a_{kl}^l)\right)
\end{equation}
and the coefficient of $y$ has to vanish.
We put 
\[
b_{ij}=a_{ij}^i+a_{ij}^j\;.
\]
Then the condition that \eqref{y-coef} vanishes can be written as
\begin{equation}\label{b-eqn}
b_{ij}-b_{il}=b_{kj}-b_{kl}\;.
\end{equation}
This system of equations allows all $b_{ij}$ to be expressed in terms 
of $n$ of them, say $b_{1k}$, $2\leq k \leq n$ and $b_{23}$.  A more symmetric
solution is to introduce variables $b_i$, $1\leq i \leq n$, and put
\[
b_{ij} = b_i+b_j\;.
\]
This solves the equations \eqref{b-eqn} and the $b_i$ are determined by the
$b_{ij}$:
we have $b_{12}=b_1+b_2$, $b_{13}=b_1+b_3$ and $b_{23}=b_2+b_3$ so
\begin{align*}
2b_1&=b_{12}+b_{13}-b_{23}\\
2b_2&=b_{12}+b_{23}-b_{13}\\
2b_3&=b_{13}+b_{23}-b_{12}
\end{align*}
and
\[
b_k=b_{1k}-b_1=b_{1k}+\textstyle{\frac12 b_{23}-\frac12 b_{13}-\frac12 b_{12}}\;.
\]
We apply coordinate transformations to the
\[
G_{ij}=z_iz_j-y+ a_{ij}^iz_i+ a_{ij}^jz_j+ \sum_{s\neq i,j} a^sz_s
\]
by transforming 
$y\mapsto y+\sum_{s}a^s z_s$ and 
$ z_i\mapsto z_i+a^i-b_i$.
Taking only first order terms into account the result is
\[
G_{ij}=z_iz_j-y+ (a_{ij}^i-a^i+a^j-b_j)z_i+ (a_{ij}^j-a^j+a^i-b_i)z_j \;.
\]
The coefficients of $z_i$ and $z_j$ sum to zero:
\[
(a_{ij}^i-a^i+a^j-b_j)+ (a_{ij}^j-a^j+a^i-b_i)=
a_{ij}^i+ a_{ij}^j-b_i-b_j=0\;.
\]
Finally we put $a_{ij}=a_{ij}^i-a^i+a^j-b_j$ and
$a_{ji}=a_{ij}^j-a^j+a^i-b_i$. Then $a_{ij}+a_{ji}=0$.
As deformation variables we can take the $a_{ij}$ with $i<j$, 
but  it will
be convenient to allow also $a_{ij}$ with $i>j$, and we 
will freely use that $a_{ij}=-a_{ji}$ . The result is the following.

\begin{lemma}
A basis for $T^1$ is represented by the $\binom n2$ first order defomations
\[G_{ij}=z_iz_j-y+ a_{ij}(z_i-z_j)\]
with $i<j$.
\end{lemma}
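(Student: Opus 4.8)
The plan is to exhibit the listed deformations as representatives of a spanning set of $T^1$ and then to match their number against the known dimension. Since $T^1$ is concentrated in degree $-1$ by \cite{Gr}, I would first argue that every first-order deformation perturbs each generator $g_{ij}$, which has weight two, by a weight-one element of $P$; the only such elements are linear combinations of $z_1,\dots,z_n$, as $y$ has weight two. This is exactly the ansatz $G_{ij}=z_iz_j-y+\sum_m a_{ij}^m z_m$, so no generality is lost in restricting to it.

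Next I would impose first-order flatness, which in computing $T^1$ amounts to requiring that each relation among the $g_{ij}$ lift to a relation among the perturbed $G_{ij}$. The Koszul relations lift automatically, so it suffices to lift the linear relations $R_{ik;jl}$, which were shown above to generate the remaining syzygies. Substituting the $G_{ij}$ into $R_{ik;jl}$ and reducing modulo $I$ produces precisely the two families of constraints derived above: the coefficient of $z_k^2$ forces $a_{ij}^k=a^k$ for $k\neq i,j$, and the coefficient of $y$ in \eqref{y-coef} yields the linear system \eqref{b-eqn} on the quantities $b_{ij}=a_{ij}^i+a_{ij}^j$, solved by $b_{ij}=b_i+b_j$.

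I would then pass to $T^1$ itself by discarding the trivial deformations, namely those induced by the weight $-1$ infinitesimal automorphisms of $\A^{n+1}$. The substitutions $y\mapsto y+\sum_s a^s z_s$ and $z_i\mapsto z_i+(a^i-b_i)$ are exactly such automorphisms, and normalizing by them removes the common coefficients $a^s$ and the $b_i$, leaving $G_{ij}=z_iz_j-y+a_{ij}(z_i-z_j)$ with $a_{ij}+a_{ji}=0$. Hence the $\binom n2$ deformations indexed by $a_{ij}$ with $i<j$ span $T^1$. Since $\dim T^1=\binom n2$ by \cite{Gr}, a spanning family of this exact size is automatically linearly independent, so it is a basis and the lemma follows.

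The step I expect to be the main obstacle is the flatness bookkeeping together with the completeness of the derived constraints. Concretely, one must know that the $R_{ik;jl}$ (modulo Koszul relations) really do generate all syzygies, so that lifting them is equivalent to first-order flatness, and then track the reduction modulo $I$ carefully enough to be certain that $a_{ij}^k=a^k$ and \eqref{b-eqn} exhaust the conditions. One must also check that the two coordinate changes above exhaust the weight $-1$ trivial deformations, so that the $a_{ij}$ with $i<j$ are genuine coordinates on $T^1$ rather than still subject to hidden identifications. Once these points are secured, the passage to a basis is forced by the dimension equality and needs no further argument.
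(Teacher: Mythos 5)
Your proposal is correct and follows essentially the same route as the paper: the same weight-one ansatz $G_{ij}=z_iz_j-y+\sum a_{ij}^m z_m$ (justified by $T^1$ being concentrated in degree $-1$), the same lifting of the relations $R_{ik;jl}$ yielding the constraints $a_{ij}^k=a^k$ and \eqref{b-eqn} solved by $b_{ij}=b_i+b_j$, and the same normalizing coordinate transformations $y\mapsto y+\sum_s a^sz_s$, $z_i\mapsto z_i+a^i-b_i$. Your closing step — spanning plus the count $\dim T^1=\binom n2$ from \cite{Gr} forces linear independence — is exactly how the paper implicitly completes the argument, merely made explicit.
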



\subsection{}
To compute the versal deformation we have to lift the relations $R_{ik;jl}$.
We 
compute a lift of the
relation up to first order:
\begin{multline*}
z_k(G_{ij}-G_{il})-z_i(G_{kj}-G_{kl})\\-a_{ij}(G_{ik}-G_{jk})+a_{il}(G_{ik}-G_{lk})
+a_{kj}(G_{ik}-G_{ij})-a_{kl}(G_{ik}-G_{il})
\\{}=
z_k(a_{ij}z_i+a_{ji}z_j -a_{il}z_i+a_{li}z_l)-z_i(a_{kj}z_k+a_{jk}z_j -a_{kl}z_k+a_{lk}z_l)\\
\begin{aligned}
&-a_{ij}(z_iz_k-z_jz_k+a_{ik}z_i+a_{ki}z_k-a_{jk}z_j-a_{kj}z_k)\\
&+a_{il}(z_iz_k-z_lz_k+a_{ik}z_i+a_{ki}z_k-a_{lk}z_l-a_{kl}z_k)\\
&+a_{kj}(z_iz_k-z_iz_j+a_{ik}z_i+a_{ki}z_k-a_{ij}z_i-a_{ji}z_j)\\
&-a_{kl}(z_iz_k-z_iz_l+a_{ik}z_i+a_{ki}z_k-a_{il}z_i-a_{li}z_l)
\end{aligned}\\{}
=(z_k-z_i)(a_{ij}a_{ik}+a_{kj}a_{ki}+a_{jk}a_{ji}-a_{il}a_{ik}-a_{kl}a_{ki}-a_{lk}a_{li})\;.
\end{multline*}
We abbreviate 
\[
\vp_{ijk} = a_{ij}a_{ik}+a_{ji}a_{jk}+a_{ki}a_{kj}
\]
so that the term in parentheses on the right hand side of the equation above
becomes $\vp_{ijk}-\vp_{ilk}$. The right hand side can be made to vanish
by subtracting this term from $F_{ij;il}=G_{ij}-G_{il}$, for all choices of three out of 
the four indices $i$, $j$, $k$ and $l$; this means
subtracting $\vp_{kji}-\vp_{kli}=\vp_{ijk}-\vp_{ilk}$ from $F_{kj;kl}=G_{kj}-G_{kl}$.
To check that this indeed gives a lift of the relation we have to compute that
\[
-a_{ij}(\vp_{ikl}-\vp_{jkl})+a_{il}(\vp_{ijk}-\vp_{jlk})
+a_{kj}(\vp_{ikl}-\vp_{ijl})-a_{kl}(\vp_{ijk}-\vp_{ijl})=0\;.
\]
For this it suffices to find for every term in the first summand a term which cancels it;
the computation is  not difficult.

We have now lifted the relation $R_{ik;jl}$, but to lift the relation $R_{im;jl}$ for $m\neq k$
we need that   $\vp_{ijk}-\vp_{ilk}=\vp_{ijm}-\vp_{ilm}$. These equations describe
the base space. We formulate  the result of our computation directly
in terms of the $a_{ij}$.

\begin{thm}\label{versal-def-thm}
The versal deformation of $L_{n+1}^n$, where $n\geq 4$, is given by the polynomials
\begin{align*}
F_{ij;il}&= z_iz_j-z_iz_l+(a_{ij}-a_{il})z_i+a_{ji}z_j-a_{li}z_l \\
&\qquad - a_{ij}a_{ik}-a_{ji}a_{jk}-a_{ki}a_{kj}+ a_{il}a_{ik}+a_{li}a_{lk}+a_{ki}a_{kl}\\
&=(z_i-a_{ij})(z_j-a_{ji})-(a_{ik}-a_{ij})(a_{jk}-a_{ji})\\
&\qquad -(z_i-a_{il})(z_l-a_{li})+(a_{ik}-a_{il})(a_{lk}-a_{li})\;,
\end{align*}
where $a_{ij}+a_{ji}=0$ for all $1\leq i,j\leq n$,
with base space $B_n$ given by 
\begin{multline*}
\quad \Phi^i_{jl;km}=(a_{ik}-a_{ij})(a_{jk}-a_{ji})-(a_{ik}-a_{il})(a_{lk}-a_{li})\\
- (a_{im}-a_{ij})(a_{jm}-a_{ji})+(a_{im}-a_{il})(a_{lm}-a_{li})\quad
\end{multline*}
for all distinct $i$, $j$, $k$, $l$, $m$.
\end{thm}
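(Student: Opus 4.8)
The plan is to verify directly that the listed polynomials define a flat deformation of $C_n$ over the subscheme $B_n=\{\Phi^i_{jl;km}=0\}$, to check that its Kodaira--Spencer map identifies the parameter space with $T^1$, and then to upgrade this to versality using that $T^2$ lies in a single weight.

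First I would check that the two displayed expressions for $F_{ij;il}$ agree. Expanding the factored form and using $a_{ij}+a_{ji}=0$ gives the identity $(a_{ik}-a_{ij})(a_{jk}-a_{ji})=\varphi_{ijk}-a_{ij}^2$, so the extra squares $a_{ij}^2$ and $a_{il}^2$ cancel against the constant terms of $(z_i-a_{ij})(z_j-a_{ji})-(z_i-a_{il})(z_l-a_{li})$ and the first form reappears. In particular the part of $F_{ij;il}$ linear in the $a$'s is the difference of the $T^1$-basis elements of the Lemma, so the Kodaira--Spencer map is the tautological identification $\A^{\binom n2}=T^1$; since the $\Phi$'s are quadratic, $B_n$ is smooth to first order and $T_0B_n=T^1$.

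The heart of the argument is flatness, for which I would lift every linear relation $R_{ik;jl}$ to an honest relation among the perturbed generators over the ring $\mathcal O_{B_n}=k[a_{ij}]/(\Phi)$. Starting from the first-order lift displayed before the theorem, whose residual obstruction is $(z_k-z_i)(\varphi_{ijk}-\varphi_{ilk})$, I would absorb this obstruction into the quadratic corrections of the generators and show that, after eliminating $y$,
\[
z_k\,F_{ij;il}-z_i\,F_{kj;kl}+(\text{correction terms})\equiv 0\pmod{(\Phi^i_{jl;km})}.
\]
The Koszul relations lift automatically, so only these $R$'s matter. The weight-three obstruction already cancels by the choice of correction; what remains is to check that the cubic and higher terms produced by reinserting the $F$'s into the corrections cancel identically, using only $a_{ij}+a_{ji}=0$, the consistency identity stated before the theorem, and the antisymmetry of $\Phi^i_{jl;km}$ in $(j,l)$ and in $(k,m)$. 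This is where the symmetric factored form of $F_{ij;il}$ does the essential work, and it is the step I expect to be the main obstacle: the cancellations are routine in principle but combinatorially dense, and organising them so that the leftover is visibly a combination of the $\Phi$'s is the real content.

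It then remains to pass from flatness to versality. Here I would use that $T^2$ is annihilated by the maximal ideal and that the singularity is quasi-homogeneous with $T^1$ in weight $-1$: by the graded theory of Pinkham and Looijenga recalled above, the defining ideal of the versal base is homogeneous and generated in the single weight dual to $T^2$. Since each $a_{ij}$ has weight $1$ and the relations weight $3$, the primary obstruction --- computed before the theorem to be exactly the $\Phi^i_{jl;km}$ --- has weight $2$, so the versal base ideal is generated by these weight-two forms and equals $(\Phi)$. Combined with the flatness just established and the Kodaira--Spencer isomorphism, this shows that $B_n$ is the versal base and the $F_{ij;il}$ the versal family; as a numerical check one confirms that the independent $\Phi^i_{jl;km}$, counted modulo the two antisymmetries, number $\dim T^2=\tfrac{n(n+1)(n-4)}6$.
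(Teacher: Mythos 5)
Your plan follows the paper's own route exactly: perturb the generators by the $T^1$-basis, lift the relations $R_{ik;jl}$ and find the residual obstruction $(z_k-z_i)(\varphi_{ijk}-\varphi_{ilk})$, absorb it into quadratic corrections of the equations, read off the consistency conditions $\Phi^i_{jl;km}=0$ between the lifts of $R_{ik;jl}$ and $R_{im;jl}$ as the base equations, and deduce versality from the grading ($T^1$ in weight $-1$, $T^2$ annihilated by the maximal ideal and concentrated in a single weight, with the count of independent $\Phi$'s matching $\dim T^2$). The one step you defer as ``combinatorially dense'' is precisely the identity the paper states and checks term by term, namely $-a_{ij}(\varphi_{ikl}-\varphi_{jkl})+a_{il}(\varphi_{ijk}-\varphi_{jlk})+a_{kj}(\varphi_{ikl}-\varphi_{ijl})-a_{kl}(\varphi_{ijk}-\varphi_{ijl})=0$, so your outline matches the paper's proof in both structure and substance.
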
  

It can be checked that the formulas (1.1) -- (1.4) in \cite{LP} only differ from the above ones
by a coordinate transformation. Their formulas are less symmetric, as they choose to give the
first two coordinates a special role.

Only in the case $n=4$, where there are no equations for the base space, which 
therefore is smooth, there are nice formulas for the total
space in terms of the $G_{ij}$:
\[
G_{ij}=z_iz_j-y+ a_{ij}z_i+a_{ji}z_j-\vp_{ijk}-\vp_{ijl}\;.
\]
For general $n$  one can take the same formulas for $1\leq i < j \leq 4$ and then 
find the other $G_{kl}$ as all $G_{ij}-G_{il}$ are known.

\subsection{}\label{sect_smooth}
It is well known that the curve $L_{n+1}^n$ is smoothable. 
One reason is that it is a general hyperplane section of the cone over an
elliptic normal  curve of degree $n+1$, so \lq sweeping out the cone\rq\ defines
a smoothing \cite[(7.6)]{Pi}.
Pinkham also describes an inductive
procedure, where two lines are smoothed to a quadric with given tangent direction,
forming an $L_{n}^{n-1}$
\cite[(11.13)]{Pi}. Such a partial smoothing occurs along the 
$a_{ij}$-axis in the base space. To be specific, we choose
$a_{n-1,n}=t$ and $a_{ij}=0$ for $(i,j)\neq (n-1,n)$. This solves the equations for the
base space as $\vp_{ijk}=0$ for all $(i,j,k)$. The equations for the total space of this
1-parameter deformation are
\[
z_iz_j-y=0,  \qquad z_{n-1}z_n-y+t (z_{n-1}-z_n)=0\;.
\]
For $t\neq0$ this is a curve consisting of  $n$ smooth branches.
Among  these are  the  $z_i$-axes for $1\leq i\leq n-2$ and the curve $(t,\dots,t,t^2)$.
Together they form a curve consisting of  $n-1$ smooth branches in general position
and the linear space spanned by their tangents at the origin  intersects the 
$(z_{n-1},z_n)$-plane in the diagonal.  The last branch of the deformed curve
is a hyperbola in the 
$(z_{n-1},z_n)$-plane with the diagonal as tangent line at the origin. This is
indeed a singularity of type $L_{n}^{n-1}$.

It is also possible to smooth one coordinate axis and the parabola with tangent
through $(1,\dots,1,0)$ (in $(z_i,y)$-coordinates).
If the axis is the $z_n$-axis, then we  have the following deformation:
\[
\begin{cases}
(z_{i}-t)(z_n+t)-y=0, & 1\leq i \leq n-1,\\
 z_iz_j-y=0,  &1\leq i < j\leq n-1.
\end{cases}
\]

 \section{The base space}
 
 \subsection{}
The base space is smooth for $n=4$. In that case the curve is codimension three Gorenstein
so it and its deformations can be given as Pfaffians of a skew $5\times5$ matrix.
For $n\geq5$ we have to analyse the polynomials
\[
\Phi^i_{jl;km}=\vp_{ijk}-\vp_{ilk}-\vp_{ijm}+\vp_{ilm}\;.
\]
We note that $\Phi^i_{jl;km}$ is antisymmetric in $j,l$ and $k,m$ and symmetric in the
pairs $(j,l),(k,m)$. Furthermore
\begin{equation}\label{phi-eqn}
\Phi^i_{jl;km}-\Phi^n_{jl;km}-\Phi^k_{jl;in}+\Phi^m_{jl;in}=0\;.
\end{equation}

Without explicitly identifying their equations as the versal deformation of $L_{n+1}^n$,
Lekili and Polishchuk observe that the total space of the family for $n-1$
is isomorphic to the base space for $n$ \cite[Prop.~1.1.5 (ii)]{LP}. 
This can be seen easily from our equations.

\begin{thm}\label{thm:basetotal}
For $n\geq 5$ 
the base space $B_n$ of the versal deformation of $L_{n+1}^n$
is  isomorphic to 
the total space of the versal deformation of $L_{n}^{n-1}$.
\end{thm}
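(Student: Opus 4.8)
The plan is to prove the isomorphism by an explicit renaming of coordinates followed by a comparison of the two defining ideals. Write $\mathcal{X}_{n-1}$ for the total space of the versal deformation of $L_{n}^{n-1}$, which by Theorem \ref{versal-def-thm} (applied with $n$ replaced by $n-1$) is the subscheme of $\A^{n-1}\times\A^{\binom{n-1}2}$, with fibre coordinates $z_1,\dots,z_{n-1}$ and deformation parameters $a_{ij}$ ($1\le i<j\le n-1$), cut out by the curve equations $F_{ij;il}$ together with the base equations of $B_{n-1}$, all indices lying in $\{1,\dots,n-1\}$. Since $\binom n2=\binom{n-1}2+(n-1)$, the ambient affine space of $B_n$, with coordinates $a_{ij}$ ($1\le i<j\le n$), has the same dimension, and I would use the renaming $a_{in}\mapsto z_i$ (hence $a_{ni}\mapsto -z_i$) for $1\le i\le n-1$, and $a_{ij}\mapsto a_{ij}$ for $i,j<n$. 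The theorem then amounts to the assertion that this linear isomorphism carries the ideal of $B_n$ onto the ideal of $\mathcal{X}_{n-1}$.

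To establish the ideal equality I would sort the generators $\Phi^i_{jl;km}$ of the ideal of $B_n$ by the position of the index $n$. The generators with $n\notin\{i,j,k,l,m\}$ are literally the base equations of $B_{n-1}$, so they occur on both sides. For a generator in which $n$ is one of the subscripts, the antisymmetry of $\Phi^i_{jl;km}$ in $(j,l)$ and in $(k,m)$ and its symmetry in the two pairs let me move $n$ into the last slot, reducing to $\Phi^i_{jl;kn}$. The key computation is short: the substitution turns $\vp_{ijn}$ into $z_iz_j+a_{ij}z_i+a_{ji}z_j$, and carrying this through gives $\Phi^i_{jl;kn}=-F_{ij;il}$, with $k$ playing the role of the auxiliary index. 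Thus the subscript-$n$ generators are exactly the curve equations $F_{ij;il}$ of $\mathcal{X}_{n-1}$, and conversely every such $F$ arises in this way; together with the previous case this accounts for both families of generators of $\mathcal{X}_{n-1}$.

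It remains to deal with the generators $\Phi^n_{jl;km}$ in which $n$ is the superscript; these are the genuinely delicate ones, since the symmetries do not move a superscript and the substitution does not turn them into curve equations. For these I would appeal to the relation \eqref{phi-eqn}, which expresses $\Phi^n_{jl;km}$ as a combination of $\Phi^i_{jl;km}$ and of $\Phi^k_{jl;in}$, $\Phi^m_{jl;in}$ for any sixth index $i$; the first term is a no-$n$ generator and the last two are subscript-$n$ generators, so $\Phi^n_{jl;km}$ already lies in the ideal generated by the cases already treated and contributes nothing new. That this is forced is visible from the numerology of the given $T^2$-formula, which gives $\frac{n(n+1)(n-4)}6-\frac{(n-1)n(n-5)}6=\binom{n-1}2-1$, the right-hand side matching the number of curve equations, so that the no-$n$ and subscript-$n$ generators already exhaust the obstructions.

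The main obstacle is that \eqref{phi-eqn} requires six distinct indices and so is unavailable in the base case $n=5$: there are no no-$n$ generators at all (five distinct indices cannot fit in $\{1,\dots,4\}$), the base $B_4$ is smooth, and one must check by hand that each of the superscript-$5$ polynomials $\Phi^5_{jl;km}$ lies in the ideal generated by the five curve equations of $\mathcal{X}_4$. This reduces to a finite computation: after substitution the quadratic part of $\Phi^5_{jl;km}$ is $(z_j-z_l)(z_k-z_m)$, and using the curve equations to rewrite each product $z_pz_q$ shows that every term cancels modulo $I(\mathcal{X}_4)$. Once this base case is secured, the two ideals coincide under the renaming and the isomorphism $B_n\cong\mathcal{X}_{n-1}$ follows for every $n\ge 5$.
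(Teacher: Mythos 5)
Your proposal is correct, and its skeleton coincides with the paper's: the same renaming $z_i=a_{in}$, the same identification of the subscript-$n$ generators with the curve equations (your computation $\Phi^i_{jl;kn}=-F_{ij;il}$ matches the paper's $F_{ij;il}\mapsto\Phi^i_{jl;nk}$), of the no-$n$ generators with the equations of $B_{n-1}$, and the use of \eqref{phi-eqn} for the superscript-$n$ generators. The difference lies in what carries the logical weight. The paper's proof is an induction whose engine is a dimension count: the transported equations give $\frac{n(n-3)}2+\frac{n(n-1)(n-5)}6=\frac{n(n+1)(n-4)}6=\dim T^2$ linearly independent quadrics, with \eqref{phi-eqn} appearing only as a closing remark; this count covers $n=5$ automatically (there are no no-$n$ generators there) and yields the additional fact that the transported equations are a minimal generating set, which is what feeds the subsequent Corollary and Proposition \ref{base-prop}. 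You instead let \eqref{phi-eqn} do all the work, which makes the argument purely ideal-theoretic, with no induction and no input from the $T^2$ formula --- and you correctly spot the resulting cost, which the paper glosses over: \eqref{phi-eqn} requires six distinct indices, so it is unavailable at $n=5$, where a separate verification is needed. Your finite check there does succeed, but you could avoid the case split altogether: the five-index identity $\Phi^i_{jl;km}+\Phi^m_{jl;ik}+\Phi^k_{jl;mi}=0$, which follows at once from the symmetry of $\vp_{abc}$ in its three indices just as \eqref{phi-eqn} does, applied with $i=n$ gives $\Phi^n_{jl;km}=-\Phi^m_{jl;nk}-\Phi^k_{jl;mn}$, exhibiting every superscript-$n$ generator as a constant-coefficient combination of two subscript-$n$ generators, i.e.\ of two curve equations. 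This holds for all $n\geq5$, makes your route uniform, and removes the base case entirely; what it does not give you, and the paper's count does, is the linear independence (hence minimality) of the resulting system of equations for $B_n$.
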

\begin{proof}
We start from the equations of the total space and the base space
in Theorem \ref{versal-def-thm}.
By substituting $z_i=a_{in}$ the polynomial $F_{ij;il}$ (written with the index $k$)
becomes  the polynomial $\Phi^i_{jl;nk}$. 
We establish by induction that this procedure gives (together with the equations for the base space of $L_{n}^{n-1}$) equations describing the base space of $L_{n+1}^n$,
by showing that the number of independent equations  is equal to the
dimension of $T^2$, which is $\frac{n(n+1)(n-4)}6$.

The polynomials $F_{ij;il}$  give
$\frac{n(n-3)}2$ linearly independent equations.
The polynomials  $\Phi^i_{jl;mk}$ not involving the index $n$ describe
the base space $B_{n-1}$, and  by the induction hypothesis $\frac{n(n-1)(n-5)}6$  of them are linearly independent (the base case is $n=5$, where no such polynomials exist).
This gives   $\frac{n(n-3)}2+
\frac{n(n-1)(n-5)}6= \frac{n(n+1)(n-4)}6$ linear independent quadratic
equations for $B_n$. 
We remark that this procedure does not give polynomials of the form  $\Phi^n_{jl;km}$,
but equation \eqref{phi-eqn} shows that such polynomials are linear combinations
of the other ones. Therefore we obtain all equations of  Theorem \ref{versal-def-thm}
for $B_n$.
\end{proof}

\begin{cor}
The base space $B_n$ is Gorenstein, has dimension $n+2$ and multiplicity  $\frac{n!}{24}$, and is smooth in codimension 6.
\end{cor}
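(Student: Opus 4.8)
The plan is to deduce all four assertions from Theorem~\ref{thm:basetotal} by induction on $n$, the anchor being $n=4$, where $B_4\cong\A^6$ is smooth, of dimension $\binom 42=6=4+2$, of multiplicity $1=4!/24$, and (trivially) regular in every codimension. For $n\geq 5$ write $\rho\colon B_n\cong\mathcal X_{n-1}\to B_{n-1}$ for the flat total space of the versal deformation of $L_n^{n-1}$, with one-dimensional fibres. The dimension statement is then immediate: $\dim B_n=\dim B_{n-1}+1$, so $\dim B_n=n+2$. For the Gorenstein property I invoke the standard fact that a flat local homomorphism whose base and closed fibre are Gorenstein has Gorenstein source. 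The base $B_{n-1}$ is Gorenstein by the inductive hypothesis, and by Looijenga's moduli description every fibre of the family is an open subscheme of a reduced Gorenstein curve of arithmetic genus one, hence Gorenstein at each of its points; applying the fact at every point of $\mathcal X_{n-1}$ shows $B_n$ is Gorenstein.

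For the multiplicity I use that $B_n$ is a cone: the $\Phi^i_{jl;km}$ are homogeneous quadratic in the weight-one variables $a_{ij}$, so, $B_n$ being Cohen--Macaulay, $e(B_n)$ equals the generic number of sheets of a finite linear projection $B_n\to\A^{n+2}$ given by a general linear system of parameters. Such a system can be chosen compatibly with $\rho$: the pullback of a general linear system of parameters on $B_{n-1}$, whose projection has generic degree $e(B_{n-1})$, together with one further general linear form in the fibre coordinates $z_i=a_{in}$. The latter exhibits a general fibre, a smoothing of $L_n^{n-1}$, as an $n$-sheeted cover of $\A^1$, since $L_n^{n-1}$ has multiplicity $n$ and this degree is locally constant in the flat family. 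The composite is finite of generic degree $n\cdot e(B_{n-1})$, and as the chosen forms are general they form a reduction of the maximal ideal, so $e(B_n)=n\cdot e(B_{n-1})$; with $e(B_4)=1$ this yields $e(B_n)=n!/24$.

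For regularity in codimension $6$ I first locate the singular locus through the fibration. By openness of versality, at a point $(b,x)$ the germ of $\mathcal X_{n-1}$ is the product of a smooth germ with the total space of the versal deformation of the fibre singularity $(X_b,x)$. A complete intersection singularity has smooth versal total space, and since the fibres are Gorenstein curves, every fibre singularity of embedding dimension at most three is a complete intersection (codimension $\leq 2$ Gorenstein); such points are therefore smooth points of $B_n$. Consequently
\[
\operatorname{Sing}(B_n)\subseteq\{(b,x): \operatorname{embdim}(X_b,x)\geq 4\},
\]
and it remains to bound the codimension of the right-hand side by $7$.

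A fibre singularity of embedding dimension $\geq 4$ is, by the classification of Gorenstein genus-one curve singularities, a single elliptic $m$-fold point $L_{m+1}^m$ with $4\leq m\leq n-1$ (it carries all the genus, so the remaining singularities are nodes). By openness of versality the induced deformation of this point is versal; since $L_{m+1}^m$ has no moduli and only negative-weight deformations, the locus carrying it is the preimage of the vertex of $B_m$ under a submersion $B_{n-1}\to B_m$, of codimension $\dim B_m$ in $B_{n-1}$ and hence, the singular point being determined, of codimension $\dim B_m+1=m+3$ in $\mathcal X_{n-1}$ (here $\dim B_m=m+2$ is available from the already established dimension count). As $m+3\geq 7$, with equality exactly for $m=4$ (the stratum along which the fibre acquires an $L_5^4$-singularity, where $B_4$ is smooth of dimension $6$), the locus in question has codimension $\geq 7$, so $B_n$ is smooth in codimension $6$. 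I expect the genuine content, and the main obstacle, to lie exactly in this last part: pinning down $\operatorname{Sing}(B_n)$ as the non-complete-intersection fibre locus (the smoothness of versal total spaces of complete intersections together with openness of versality) and justifying the codimension count uniformly, the borderline being the $L_5^4$-stratum of codimension $7$.
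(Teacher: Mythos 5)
Your treatment of the Gorenstein property, the dimension, and the multiplicity follows the paper's own route: induction through Theorem~\ref{thm:basetotal}, Matsumura's criterion for flat local morphisms (the paper reduces to the origin by homogeneity instead of invoking Gorenstein fibres at every point), and the recursion $e(B_n)=n\,e(B_{n-1})$, which is what the paper's terse ``the multiplicity follows again by induction'' means. Those parts are fine, up to minor hand-waving (finiteness of the linear projection on the affine fibres should be justified by the cone structure, i.e.\ by graded Noether normalisation). The last part, however, contains a genuine gap.

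The claim ``at a point $(b,x)$ the germ of $\mathcal X_{n-1}$ is the product of a smooth germ with the total space of the versal deformation of the fibre singularity $(X_b,x)$'' is false, and with it the containment $\operatorname{Sing}(B_n)\subseteq\{(b,x):\operatorname{embdim}(X_b,x)\geq 4\}$. Openness of versality is a statement about the multigerm of \emph{all} singularities of $X_b$: it gives $(B_{n-1},b)\cong \prod_{x'}B(X_b,x')\times(\text{smooth germ})$ and correspondingly $(\mathcal X_{n-1},(b,x))\cong TS(X_b,x)\times\prod_{x'\neq x}B(X_b,x')\times(\text{smooth germ})$, where $TS$ denotes the versal total space of the germ at $x$ and the $B(X_b,x')$ are the versal \emph{base} germs of the other singularities of the same fibre. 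The induced deformation of the single germ $(X_b,x)$ over $(B_{n-1},b)$ is versal only when those other factors are smooth: a family pulled back along a projection $B\times Z\to B$ with $Z$ singular is never versal, since versality would force the infinitesimal lifting property for maps into $Z$, i.e.\ smoothness of $Z$. Concretely, for $n\geq 6$ take $b$ the vertex of $B_{n-1}$ and $x$ any \emph{smooth} point of the central fibre $L_n^{n-1}$: there $\rho$ is a smooth morphism, so the germ of $B_n$ at $(b,x)$ is $(B_{n-1},0)\times(\A^1,0)$, which is singular because $B_{n-1}$ is singular at its vertex, while $\operatorname{embdim}(X_b,x)=1$. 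Thus for $n\geq 6$ your proposed superset of $\operatorname{Sing}(B_n)$ misses a whole codimension-$7$ piece of it (for $n=6$, the entire central fibre of $\mathcal X_5$); only for $n=5$, where $B_4$ is smooth, is your argument complete as written.

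The statement survives and the repair is short: strengthen the induction hypothesis to include ``$\operatorname{Sing}(B_{n-1})$ has codimension $\geq 7$ in $B_{n-1}$''. Then
$\operatorname{Sing}(B_n)\subseteq \rho^{-1}\bigl(\operatorname{Sing}(B_{n-1})\bigr)\cup\{(b,x):\ b\notin\operatorname{Sing}(B_{n-1}),\ \operatorname{embdim}(X_b,x)\geq 4\}$.
The first set has codimension $\geq 7$ because $\rho$ has relative dimension one, so codimensions are preserved under preimage; on the second set the germ $(B_{n-1},b)$ is smooth, hence every factor $B(X_b,x')$ is smooth (a product of germs is smooth only if each factor is), your product formula becomes valid there, and your stratum count --- codimension $\dim B_m+1=m+3\geq 7$ for the $L_{m+1}^m$-stratum --- goes through. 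This two-case argument is the content behind the paper's one-line deduction from Greuel's theorem and the smoothness of the small base spaces.
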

\begin{proof}
By induction on $n$. For $n=4$ the result holds, as $B_4$ is smooth.
We view $B_n$, $n\geq5$, as total space of the versal deformation  of $L_{n}^{n-1}$. By homogeneity it suffices to show the Gorenstein property 
for the local ring at the origin. For a flat local morphism $\vp \colon A\to B$ of 
local Noetherian rings $B$ is Gorenstein if and only if $A$ and $B/\mathfrak{m}_AB$
both are Gorenstein \cite[Thm. 23.4]{Mat}.	As both the base and the special fibre
are Gorenstein, the same therefore holds for the total space.

The total space is Gorenstein and therefore Cohen-Macaulay, so all irreducible components have the same dimension, given by
the formula for the dimension
of  a smoothing component, which yields $n+2$. The dimension statement follows also by induction, the base being that 
the dimension of $B_4$ is 6. 
The curve $L_{n+1}^n$ deforms (over reduced bases)
only to other elliptic $m$-fold points or ordinary double points \cite[Prop. 3.6]{Gr}.  As $B_4$ is smooth for $n\leq 4$, it follows that $B_n$ is smooth in codimension 6.
Therefore $B_n$ is normal, and there is only one component.

The multiplicity follows again by induction. We can also use the formula for the Hilbert series of the graded ring in \cite[Cor. 1.1.7]{LP}
\end{proof}

The inductive construction of the base space makes it possible to give a minimal
system of equations. 
%
There are $\binom n3$ expressions $\vp_{ijk}$ and the
$ \frac{n(n+1)(n-4)}6$ equations allow to express exactly so many in terms of
$n$ of them, for which we take the four $\vp_{ijk}$ with $1\leq i < j < k \leq4$
and the $n-4$ expressions $\vp_{12k}$ with $5\leq k \leq n$.

\begin{prop}\label{base-prop}
A minimal system of equations for the base space $B_n$  is
\begin{align*}
\vp_{1ij} & = \vp_{12i}+\vp_{12j}+\vp_{134}-\vp_{123}-\vp_{124} \\
\vp_{2ij} & = \vp_{12i}+\vp_{12j}+\vp_{234}-\vp_{123}-\vp_{124} \\
\vp_{ijk} & = \vp_{12i}+\vp_{12j}+\vp_{12k}+\vp_{134}+  
                          \vp_{234}-2\vp_{123}-2\vp_{124} 
\end{align*}
where $3\leq i<j\leq n$, $(i,j)\neq (3,4)$ in the first two lines and 
$3\leq i<j<k\leq n$.
\end{prop}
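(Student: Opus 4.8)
The plan is to turn the statement into linear algebra over the span of the symbols $\vp_{ijk}$. Let $V$ be the $k$-vector space with basis the $\binom n3$ symbols $\vp_{ijk}$, $i<j<k$, and let $W\subseteq V$ be the subspace spanned by the quadrics $\Phi^i_{jl;km}=\vp_{ijk}-\vp_{ilk}-\vp_{ijm}+\vp_{ilm}$. Every equation occurring here---both the $\Phi^i_{jl;km}$ and the three displayed families---is a $\Z$-linear combination of the $\vp_{ijk}$, hence an element of $V$. The proof of Theorem \ref{thm:basetotal} gives $\dim W=\dim T^2=\frac{n(n+1)(n-4)}6$. Since the equations of $B_n$ in Theorem \ref{versal-def-thm} are homogeneous quadratic and $T^2$ is annihilated by the maximal ideal, the degree-two part of the ideal of $B_n$ is exactly $W$, no minimal generators occur in other degrees, and a minimal system of equations is precisely a $k$-basis of $W$. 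It therefore suffices to show that the three displayed families (i) lie in $W$, (ii) are linearly independent, and (iii) number $\dim W$.

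Steps (ii) and (iii) I expect to be immediate. The left-hand sides $\vp_{1ij}$, $\vp_{2ij}$, $\vp_{ijk}$ run, without repetition, over exactly those $\vp_{ABC}$ that do \emph{not} belong to the proposed generating set $\{\vp_{123},\vp_{124},\vp_{134},\vp_{234}\}\cup\{\vp_{12k}:5\le k\le n\}$; as each equation carries a distinct such leading basis vector, the equations are linearly independent. Counting them gives $2\bigl(\binom{n-2}{2}-1\bigr)+\binom{n-2}{3}=\binom n3-n=\frac{n(n+1)(n-4)}6=\dim W$.

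The real content is step (i), which I would carry out modulo $W$. There $\Phi^i_{jl;km}\equiv 0$ reads $\vp_{ijk}-\vp_{ilk}\equiv\vp_{ijm}-\vp_{ilm}$, i.e. with the first index $i$ fixed the difference $\vp_{ijk}-\vp_{ilk}$ is independent of $k$. Fixing $i=1$, this makes $\delta(j,l):=\vp_{1jk}-\vp_{1lk}$ a well-defined antisymmetric additive (cocycle) function of $(j,l)$ in $V/W$, hence of coboundary form $\delta(j,l)=\gamma_j-\gamma_l$; combined with the full symmetry of $\vp$ this forces $\vp_{1jk}\equiv\gamma_j+\gamma_k+\eta$ in $V/W$ for suitable $\gamma_2,\dots,\gamma_n,\eta$. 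Evaluating on the generators, $\vp_{12i}\equiv\gamma_2+\gamma_i+\eta$ and $\vp_{134}\equiv\gamma_3+\gamma_4+\eta$, and eliminating the auxiliary quantity $2\gamma_2+\eta\equiv\vp_{123}+\vp_{124}-\vp_{134}$, yields the first displayed family. The identical argument with first index $2$ (with $\vp_{234}$ in the role of $\vp_{134}$) yields the second. For the third family, $3\le i<j<k$, one further relation $\Phi^i_{j1;k2}\equiv 0$ gives $\vp_{ijk}\equiv\vp_{1ik}+\vp_{2ij}-\vp_{12i}$, into which the first two families are substituted.

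The main obstacle is the bookkeeping of step (i): one must verify that the additive reduction is uniformly valid, namely that for each pair of indices the defining relation supplies a third index $k$ distinct from all those in play---which is where $n\ge5$ enters---so that $\delta$ is genuinely well defined and a cocycle, and that the excluded case $(i,j)=(3,4)$ of the first two families (where $\vp_{134}$, resp. $\vp_{234}$, is itself a generator) is consistently omitted. Once this bookkeeping is in place the explicit formulas drop out, and steps (ii)--(iii) are routine.
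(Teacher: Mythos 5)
Your proposal is correct, and its logical skeleton coincides with the paper's: show that the displayed equations are linear consequences of the $\Phi^i_{jl;km}$, check that they are linearly independent, and count them against $\dim T^2=\frac{n(n+1)(n-4)}6$, which both you and the paper import from the proof of Theorem \ref{thm:basetotal}. The genuine difference is in how membership in $W$ is established. The paper does it constructively, by exhibiting an explicit reduction chain: $\Phi^k_{i1;j2}$ rewrites $\vp_{ijk}$ (all indices $\geq 3$) in terms of $\vp_{1jk},\vp_{2ik},\vp_{12k}$; then $\Phi^1_{i2;j3}$ and $\Phi^2_{1i;j3}$ reduce $\vp_{1ij}$ and $\vp_{2ij}$; finally $\Phi^1_{32;i4}$ and $\Phi^2_{31;i4}$ eliminate the leftover $\vp_{13i}$, $\vp_{23i}$. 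Your cocycle--coboundary normalisation $\vp_{1jk}\equiv\gamma_j+\gamma_k+\eta$ in $V/W$ replaces these explicit certificates by a structural argument. What it buys is an explanation of \emph{why} the right-hand sides have the shape they do (each $\vp$ reduces to a sum of two ``vertex weights'' plus a constant, expressed through the chosen generators); what it costs is precisely the bookkeeping you flag yourself: well-definedness of $\delta$ and the cocycle identity require five pairwise distinct indices (this is where $n\geq 5$ enters), and the degenerate cases where an index equals $3$ or $4$ must be tracked. The paper's explicit combinations dispose of both issues in one line (``all terms cancel if $i=4$''). Your verifications of the three families do check out against the paper's formulas, including the third family via $\Phi^i_{j1;k2}$.

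One loose end to pin down: your $V$ is a space of formal symbols, while ``minimal system of equations'' refers to actual quadrics in the deformation variables $a_{ij}$. For your step (ii), and for identifying $W$ with the degree-two part of the ideal, you need that the $\binom n3$ polynomials $\vp_{ijk}$ are themselves linearly independent. This is immediate --- every monomial occurring in $\vp_{ijk}$ determines the triple $\{i,j,k\}$, so distinct triples have disjoint monomial supports --- but it should be said. Also, the appeal to ``$T^2$ annihilated by the maximal ideal'' is not needed: since Theorem \ref{versal-def-thm} presents the ideal of $B_n$ as generated by the homogeneous quadrics $\Phi^i_{jl;km}$, graded Nakayama already gives that the minimal generating sets are exactly the bases of the degree-two part.
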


\begin{proof}
We reduce $\vp_{ijk}$ using 
$\Phi^k_{i1;j2}=\vp_{ijk}-\vp_{1jk}-\vp_{2ik}+\vp_{12k}$ for $i\geq 3$ 
and $\Phi^1_{i2;j3}=\vp_{1ij}-\vp_{12j}-\vp_{13i}+\vp_{123}$,  $\Phi^2_{1;j3}$,  and
finally $\Phi^1_{32;i4}=\vp_{13i}-\vp_{12i}-\vp_{134}+\vp_{124}$ and $\Phi^2_{31;i4}$. 
In the last two expressions all terms cancel
if  $i=4$. Therefore the formulas in the statement continue to hold if an index has the value
$3$ or $4$; some terms then cancel.
\end{proof}

Conversely, we also get equations for the total space, which deform the minimal
generating set $z_iz_j-z_1z_2$,
by substituting  $a_{i,n+1}=z_i$ in the equations  for $B_{n+1}$ just given.
This gives  rather complicated formulas, as the general expression for 
$\vp_{ijk}$ shows.

For $n=5$ the resulting equations of $B_5$  can be written as the Pfaffians
of the skew symmetric $5\times5$ matrix
\[\small
\begin{bmatrix}
a_{24}-a_{25} &
-a_{14}+a_{15} &
-a_{23}+a_{25} &
a_{13}-a_{15}\\
& a_{34}-a_{35}&
-a_{12}-a_{25}&
a_{12}-a_{13}+a_{24}-a_{34}\\
&&
a_{12}-a_{14}+a_{23}+a_{34}&
-a_{12}+a_{15}\\
&&&
a_{34}+a_{45}
\end{bmatrix}
\]
where we only write the part of the matrix above the diagonal.

\subsection{}
The interpretation of the base space $B_n$  as fine moduli space for $R$-polarised 
curves shows that 
the projectivisation $\P(B_n)$ is 
a compactification of the moduli space $M_{1,n+1}$ of $(n+1)$-pointed
curves of genus 1 by  curves with Gorenstein singularities; by forgetting the choice of $t$
all curves $(C,p_0,\dots,p_n)$ above a line in the base space are isomorphic.

We compare this compactification with the compactifications constructed by
Smyth \cite{SmI, Sm}.

\begin{defn}\label{def:smyth}
Let $C$ be a connected, reduced, complete curve of arithmetic genus one, with
$n+1$ distinct smooth marked points $p_0,\dots,p_n$. Let $m<n+1$. 
The curve $(C,p_0,\dots,p_n)$ is \textsl{$m$-stable} if
\begin{enumerate}
\item the curve $C$ has only nodes and Gorenstein singularities of genus one
with $r\leq m$ branches as singularities,
\item if $E\subset C$ is any connected subcurve with $p_a=1$, then
\[
|E\cap \overline{C\setminus E}|+|\{p_i\in E\}| > m\;,
\]
\item $H^0(C,\Omega^\vee_C(-\sum p_i))=0$.
\end{enumerate}
\end{defn}

Smyth proves that the moduli stack $\olsi{\mathcal M}_{1,n+1}(m)$ of $m$-stable curves is
a proper irreducible Deligne-Mumford stack over $\Spec \Z[1/6]$. 
In \cite{Sm}, working over a fixed algebraically closed field of characteristic zero, he proves 
that the corresponding coarse moduli space $\olsi{M}_{1,n+1}(m)$ 
is projective.

\begin{prop}
The moduli space  $\olsi{M}_{1,n+1}(n)$ is isomorphic to the
projectivisation $\P(B_n)$ of the base space of the versal deformation of $L^n_{n+1}$,
for $n\geq 4$.
\end{prop}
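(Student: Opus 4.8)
The plan is to construct a morphism between the two moduli problems and show it is an isomorphism of moduli spaces, rather than to compare equations directly. The key observation is that both $\P(B_n)$ and $\olsi{M}_{1,n+1}(n)$ are fine moduli spaces (Smyth shows the latter is a proper Deligne--Mumford stack, and the former carries a universal family by the Proposition of Looijenga quoted in Section~2), so it suffices to identify the two moduli functors. Concretely, I would verify that the fibres of $\bar\pi_-$ over $\P(B_n)$ are precisely the $n$-stable curves of Definition~\ref{def:smyth}, and conversely that every $n$-stable curve arises, in an appropriately unique way, as such a fibre together with a choice of polarisation $t$.

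The first step is to check that each geometric fibre $(C,p_0,\dots,p_n)$ of $\bar\pi_-$ satisfies Smyth's three conditions. We already know from the discussion after the specialisation to $L_{n+1}^n$ that the fibres are reduced Gorenstein curves of arithmetic genus one with $n+1$ marked smooth points at infinity, so the singularities are forced, by the genus-one Gorenstein classification (Greuel, as recalled in Section~1), to be nodes or elliptic $m$-fold points with $r\le n$ branches; this gives condition~(1). For condition~(2) I would argue that any connected subcurve $E$ with $p_a(E)=1$ that violated the inequality would contradict the ampleness of $\olsi{Z}_\infty=\sum p_i$ built into the notion of an $R$-polarised scheme: too few marked or attaching points on the genus-one part would make $\mathcal{O}_C(\sum p_i)$ fail to be ample along $E$. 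Condition~(3), the vanishing of $H^0(C,\Omega_C^\vee(-\sum p_i))$, is an automorphism/stability statement; here I would invoke Looijenga's Theorem~2.2, since the triviality of infinitesimal automorphisms preserving the divisor at infinity is exactly what guarantees finiteness of the automorphism group and hence the vanishing of global sections of the dual dualising sheaf twisted down by $\sum p_i$.

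Having shown the fibres are $n$-stable, the induced classifying map $\P(B_n)\to \olsi{M}_{1,n+1}(n)$ is the natural candidate for the isomorphism. To prove it is an isomorphism I would exhibit an inverse by showing that every $n$-stable curve $(C,p_0,\dots,p_n)$ reconstructs a unique $R$-polarised scheme: the line bundle $\mathcal{O}_C(\sum p_i)$ is ample and reduced as a Weil divisor, and the graded ring $R_C=\bigoplus_l H^0(\mathcal{O}_C(l\sum p_i))$ is abstractly isomorphic to the ring $R$ of $L_{n+1}^n$ because both are genus-one Gorenstein rings of the same multiplicity with the same Hilbert function. The key technical input is that the conditions $h^0(\mathcal{O}_C(p_i))=1$ needed for Lekili--Polishchuk's ring presentation are implied by $n$-stability, so that $R_C\cong R$ and one obtains a point of the fine moduli space $B_n$, canonical up to the $\G_m$-scaling that is divided out in passing to $\P(B_n)$. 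By Looijenga's uniqueness statement (the Theorem in Section~2, together with the Lemma~A.4 remark), the polarisation is determined up to this scaling, which shows the two classifying maps are mutually inverse.

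The main obstacle I expect is the precise matching of the stability conditions with the moduli-of-$R$-polarised-schemes conditions, in particular translating Smyth's combinatorial inequality~(2) into the ampleness requirement on $\olsi{Z}_\infty$ and verifying the genus-one Gorenstein ring $R_C$ is \emph{abstractly} isomorphic to $R$ for every $n$-stable $C$ (not merely of the same numerical type). Establishing this last isomorphism uniformly --- rather than curve-by-curve --- is where the structure of Section~1's classification of genus-one partition curves does the real work: since the only indecomposable genus-one Gorenstein curves of embedding dimension $\le n$ are the elliptic $m$-fold points $L^{s}_{s+1}$, and $n$-stability bounds $r\le n$, the completed local ring at the genus-one singularity is forced into the family whose smoothings $B_n$ parametrises. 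Once this uniform identification of local rings is in hand, the fine-moduli comparison is formal, and the isomorphism $\P(B_n)\cong\olsi{M}_{1,n+1}(n)$ follows.
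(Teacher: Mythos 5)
Your overall plan (check the fibres of $\bar\pi_-$ are $n$-stable, then reconstruct an $R$-polarised scheme from an $n$-stable curve to get an inverse) is reasonable in outline, but two load-bearing steps fail as stated. First, Smyth's condition (2) cannot be extracted from ampleness of $\mathcal{O}_C(\sum p_i)$: on a curve, ampleness only says the degree is positive on every irreducible component, and that is perfectly compatible with a proper connected subcurve $E$ with $p_a(E)=1$ carrying, say, one marked point and one attaching node. The actual mechanism, which is the heart of the paper's proof, is the Gorenstein condition hidden in the $R$-polarisation: the ring $R_{\olsi Z}/tR_{\olsi Z}$, i.e.\ the cone over the $n+1$ points at infinity, must be the ring $R$ of $L_{n+1}^n$. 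If $C$ had a proper connected genus-one subcurve $E$ through $k+1<n+1$ of the marked points, then $E$ spans only a $\P^k$, the complementary (genus-zero) curve spans a complementary linear space, and the cone over the points at infinity becomes $L_{k+1}^k\vee L_{n-k}^{n-k}$, which is decomposable and hence not Gorenstein --- contradicting $R_{\olsi Z}/tR_{\olsi Z}\cong R$. Ampleness sees none of this.

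Second, your inverse construction rests on the claim that $R_C=\bigoplus_l H^0(\mathcal{O}_C(l\sum p_i))$ is abstractly isomorphic to $R$ because both have the same Hilbert function. This is wrong twice over: equal Hilbert functions never force an isomorphism of rings, and $R_C$ is the wrong object anyway --- for a smooth fibre it is the cone over an elliptic normal curve, certainly not the ring of $L_{n+1}^n$; what an $R$-polarisation requires is an isomorphism $R_C/tR_C\cong R$, and establishing that for \emph{every} $n$-stable curve is precisely the converse of the cone argument above, not a consequence of classifying the singularities of $C$. Relatedly, you assume $\olsi{M}_{1,n+1}(n)$ is a fine moduli space, but Smyth provides only a Deligne--Mumford stack with a projective coarse space, so the scheme-level classifying map you want in that direction does not come for free. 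The paper sidesteps both problems: it constructs a single map $\olsi{M}_{1,n+1}(n)\to\P(B_n)$ via the universal property of the coarse space, checks it is bijective on closed points (all Gorenstein genus-one singularities with at most $n$ branches occur over $\P(B_n)$ because $L_{n+1}^n$ deforms into $L_n^{n-1}$, and proper genus-one subcurves are excluded by the cone argument), and then concludes by Zariski's main theorem because $\P(B_n)$ is \emph{normal} --- the normality coming from the Corollary to Theorem \ref{thm:basetotal}. That normality is indispensable for upgrading a pointwise bijection to an isomorphism, and it never appears in your proposal.
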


\begin{proof}
As $L_{n+1}^n$ deforms into $L_n^{n-1}$ (see Section \ref{sect_smooth}), all 
Gorenstein genus 1 singularities with at most $n$ branches occur over  $\P(B_n)$.
There cannot be a  proper subcurve with $p_a=1$: if there would be one with
degree $k+1<n+1$, then the cone over the hyperplane section at infinity would
be of type $L_{k+1}^k\vee L_{n-k}^{n-k}$, which is not Gorenstein. For $m=n$ the 
condition (2) in Definition \ref{def:smyth} excludes the case of a proper subcurve
with $p_a=1$.

By the definition of a coarse moduli space there is a map to $\P(B_n)$, which is
bijective on closed points. As  $\P(B_n)$ is normal, it is an isomorphism.
\end{proof}

For a discussion of the identification as stacks over $\Spec \Z[1/6]$, see \cite{LP}.

\section{Elliptic partition curves}
In this section we discuss the deformation theory of general 
partition curves.

\subsection{}
The equations for  the monomial curve with parametrisation
 $z_1=t^{n+1},z_2=t^{n+2}, \dots,z_{n-1}=t^{2n-1},z_n=t^{2n}$
 are 
 \[
 z_i z_j =
 \begin{cases}
 z_1 z_{i+j-1}, &i+j \leq n+1,\\
 z_2 z_{n}, &i+j =n+2,\\
  z_1^2 z_{i+j-n-2} ,&i+ j\geq n+3,
\end{cases}
\]
where $2\leq i \leq j$.
The curve deforms into  other elliptic partition curves of the same 
multiplicity $n+1$. An explicit deformation  is 
\[
 z_i z_j =
 \begin{cases}
 z_1 z_{i+j-1}, &i+j \leq n+1,\\
 z_2 z_{n}, &i+j =n+2,\\
  (z_1^2+a_2 z_n+\dots+a_n z_2 +a_{n+1}z_1) z_{i+j-n-2} ,&i+ j\geq n+3.
\end{cases}
\]
The projection on the $(z_1,z_2)$-plane is given by
$z_2^{n+1}=z_1^{n+2}+a_2z_1^2z_2^{n-1}+\dots+a_{n+1}z_1^{n+1}$
and the factorisation of 
$z_2^{n+1}-a_2z_1^2z_2^{n-1}-\dots-a_{n+1}z_1^{n+1}$ determines the partition of $n+1$.  In particular we find  a non-rational form of $L_{n+1}^n$: 
the (quadratic)  equations, similar to the above ones but with   $z_i z_j =
  z_1 z_{i+j-n-2}$ for $i+j\geq n+3$, 
define $n+1$ lines through the origin, passing through the points
$(\ep,\ep^2,\dots,\ep^n)$ with $\ep$ running through the $(n+1)$-st roots
of unity. 

Knowing the base space for the monomial 	curve implies knowing
the base space for all elliptic partition curves. However, it seems
unfeasible to compute the base space in general.
For the monomial curve of multiplicity 6 the computation of the versal deformation
is described in some detail in \cite{CS}. The base space is just as for $L^5_6$ a cone
 over the Grassmannian $G(2, 5)$, more precisely it is $B_5\times \A^5$. 
 By openness of versality the base space for a partition curve with $r$ branches
 is $B_5\times \A^{6-r}$.  
 We have computed  equations for the base space of the monomial curve of multiplicity 7. The  equations contain a very large number of monomials, and the result is too complicated to be given here. Even for the non-rational form of  seven lines
the equations are rather complicated and not very useful; in particular, the
inductive structure given by Theorem \ref{thm:basetotal} is not visible.

\subsection{}
The same  phenomenon with complicated equations  occurs for the versal deformation of (rational)
partition curves. For $L_n^n$ in the form of the coordinate axes
the result is very easy, see \cite{Sch} or
\cite{St}. The dimension of $T^1$ is $n(n-2)$, but we use 
 $n(n-1)$ deformation parameters $a_{ij}$:
  the coordinate
transformations  $z_i\mapsto z_i-\delta_i$ induce
$a_{ij}\mapsto a_{ij}+\delta_i$, for all $j$. 
The versal family is given by 
\[
F_{ij}=(z_i-a_{ij})(z_j-a_{ji})-
(a_{ik}-a_{ij})(a_{jk}-a_{ji})
\] 
with as equations for the base space
\[
(a_{ik}-a_{ij})(a_{jk}-a_{ji})-(a_{il}-a_{ij})(a_{jl}-a_{ji})=0\;.
\]
For the irrational form of  $L_n^n$, which is the hyperplane section
$z_0=z_n$ of the standard form of the  cone over the ratonal normal curve of degree $n$,
the equations become much more complicated. 
For the monomial curve the computation is
only done up to multiplicity 5 \cite{St}. The quadratic part of the equations of the base
space involves as many variables as the base space of $L_5^5$ and it defines 
a degeneration of that space. 

The (reduced)
base space for $L_n^n$ has been studied by Goryonov and Lando \cite{GL}.
In particular, its degree in $n^{n-3}$.  By a recent result of 
Polishchuk and Rains \cite{PR} this space is Cohen-Macaulay; they identify its 
coordinate ring with an algebra of global sections.

Cohen-Macaulayness does  in general  not hold for the base space
of the monomial partition curve $X_n$.

\begin{thm}
The base space of the versal deformation
of the monomial partition curve $X_n$ with $n\geq 14$ has components
which are not smoothing components; there are components of different
dimensions.
\end{thm}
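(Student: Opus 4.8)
The plan is to locate a component of the base space over which the generic fibre is a non-smoothable singularity: such a component cannot be a smoothing component, and a dimension count will show it has dimension different from the smoothing components. First I would exploit the explicit deformation of $X_n$ written above. As the coefficients $a_2,\dots,a_{n+1}$ vary, the projection to the $(z_1,z_2)$-plane is governed by a binary form of degree $n$ in $z_1,z_2$, and the multiplicities of its linear factors are precisely the parts of the partition $\boldsymbol{p}$ of the fibre $X_{\boldsymbol{p}}=X_{n_1}\vee\cdots\vee X_{n_r}$. In this way every partition curve $X_{\boldsymbol{p}}$, $\boldsymbol{p}$ a partition of $n$, occurs as a fibre of the versal family; in particular $X_n$ is smoothable, since its generic fibre (distinct roots) is $L^n_n$, which is smoothable \cite{Pi}. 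It therefore suffices to produce, for $n\ge 14$, a single partition $\boldsymbol{p}$ of $n$ for which $X_{\boldsymbol{p}}$ is non-smoothable.

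For this I would use the following criterion. The base space of the semiuniversal deformation of a curve singularity $Y$ is embedded in $T^1_Y$, while every smoothing component has dimension at least the Milnor number $\mu(Y)=2\delta-r+1$ (Greuel--Looijenga; see \cite{BG} for the $\mu$-formula). Hence $\dim T^1_Y<\mu(Y)$ forces $Y$ to be non-smoothable. For a partition curve one has $\delta(X_{\boldsymbol{p}})=n-1$, so $\mu(X_{\boldsymbol{p}})=2n-1-r$, which is large exactly when the partition is coarse (few branches); on the other hand $\dim T^1_{X_{\boldsymbol{p}}}$ can be assembled from the wedge decomposition together with the dimensions of $T^1$ of the monomial pieces $X_{n_i}$ and the pairwise gluing contributions, as studied in \cite{BC,BC1}. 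The key point, and the main obstacle, is to compute $\dim T^1_{X_{\boldsymbol{p}}}$ as a function of the partition precisely enough to exhibit a coarse $\boldsymbol{p}$ with $\dim T^1_{X_{\boldsymbol{p}}}<2n-1-r$, and to check that the least $n$ admitting such a partition is $n=14$. This is the only genuinely computational step, and I expect the balance between the linear growth of $\mu$ and the more rapid growth of $\dim T^1$ as the partition refines to be what fixes the threshold.

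It remains to transfer the conclusion back to $X_n$ by openness of versality. At a point $\sigma$ of the base with fibre the non-smoothable $X_{\boldsymbol{p}}$, the base of $X_n$ is locally isomorphic to the product of the minibase of $X_{\boldsymbol{p}}$ with a smooth factor $\A^{s}$, where $s=\dim\Sigma$ is the dimension of the stratum $\Sigma$ of fibres isomorphic to $X_{\boldsymbol{p}}$. Every irreducible component of the base of $X_n$ through $\sigma$ therefore restricts to a component $W$ of the base of $X_{\boldsymbol{p}}$; since $X_{\boldsymbol{p}}$ is non-smoothable, none of these is a smoothing component, so the base of $X_n$ indeed has components that are not smoothing components. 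Finally, as $X_n$ is smoothable its base also carries smoothing components, of the common Greuel--Looijenga smoothing dimension $d_{\mathrm{sm}}(X_n)$; comparing this with the dimension $\dim\Sigma+\dim W$ of the non-smoothing component shows the two to be unequal, so the base space has components of different dimensions. I expect this final comparison to be routine once the $T^1$-count of the previous paragraph is in hand; the real work lies in that count and in pinning down the value $n=14$.
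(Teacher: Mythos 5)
Your strategy fails at its central step, and no computation can rescue it: there is no partition $\boldsymbol{p}$ of $n$ for which $X_{\boldsymbol{p}}$ is non-smoothable. Partition curves are exactly the general hyperplane sections of rational surface singularities (see \cite{BC1}), and a general hyperplane section of a normal surface singularity is always smoothable, since moving the hyperplane gives a flat family whose nearby fibres miss the singular point. Equivalently, every $X_{\boldsymbol{p}}$ deforms into $L_n^n$, which is smoothable by sweeping out the cone \cite{Pi}, and a curve that deforms into a smoothable curve is smoothable. So the search you propose over partitions of $n$ can never produce a non-smoothable fibre, and the threshold $n=14$ cannot emerge from it. The numerics are in fact backwards from what you expect: $\dim T^1$ of a partition curve grows quadratically in $n$ (already $n(n-2)$ for $L_n^n$), while $\mu=2\delta-r+1=2n-1-r$ is linear, so $\dim T^1_{X_{\boldsymbol{p}}}<\mu(X_{\boldsymbol{p}})$ holds for no partition of any $n$. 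Your criterion is also misstated: there is no theorem asserting that smoothing components of curve singularities have dimension at least $\mu$ (this is false already for non-quasihomogeneous plane curves, where the unique smoothing component has dimension $\tau<\mu$). The correct statement is Deligne's formula: every smoothing component of a reduced curve singularity has the same dimension $e=3\delta-c$ for an invariant $c$ built from vector fields on the normalisation (see \cite{Gr}), and for the non-Gorenstein curves relevant here $e\neq\mu$; for instance $L_{10}^6$ has $e=20$ but $\mu=17$. A minor further slip: the explicit family you quote is the one the paper writes for the elliptic monomial curve $\langle n+1,\dots,2n\rangle$, not for $X_n$, though an analogous family for $X_n$ does exist.

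What the theorem actually requires is fibres that leave the class of partition curves altogether: curves with many branches in a space of \emph{smaller} embedding dimension. This is what the paper does. The curve $X_{14}$ deforms ($\delta$-constantly) into the partition curve $L_6^6\vee 4A_2$ of the partition $(1,\dots,1,2,2,2,2)$ of $14$, and the Lemma preceding the theorem (that $L_{r-1}^n\vee A_2$ deforms into $L_r^n$, by straightening one cusp into a line in general position) pushes this further to $L_{10}^6$, ten lines through the origin in $\A^6$ --- the smallest non-smoothable curve singularity. This is where $14=6+2\cdot 4$ comes from, not from any balance of $\mu$ against $T^1$ over partitions. A general, non-smoothable $L_{10}^6$ as fibre yields, by openness of versality, components of the base of $X_{14}$ that are not smoothing components; and choosing instead a \emph{smoothable} $L_{10}^6$ in special position, whose own base is known to have a $20$-dimensional smoothing component and a $15$-dimensional equisingular component, yields components of different dimensions. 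Note that this second claim is not the ``routine'' consequence you suggest: a non-smoothing component could a priori have the same dimension as a smoothing component, which is precisely why the paper imports a fibre whose base is already reducible with components of unequal dimensions. Finally, $X_N$ deforms into $X_{N-1}$, which propagates the conclusion to all $N\geq 14$. Your closing transfer argument via openness of versality is sound and is also how the paper concludes, but it must be fed fibres like $L_{10}^6$, which your search space does not contain.
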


The idea behind this statement is that the curves  $X_n$ are the most singular 
curves, in the sense that that any curve singularity degenerates to a partition curve
(in a $\delta$-constant degeneration) \cite{Boz}.

Pinkham's examples of non-smoothable curves are among the 
 curves $L_r^n$ consisting of $r$ lines in general position
through the origin in $\A^n$ \cite{Pi, Gr}, with $n+1\leq r \leq \binom {n+1}2$.
One has $\delta(L_r^n)= 2r-n-1$, so the genus of the curve is  $g=r-n$.
We can write $\delta= n+2g-1$.
The general such curve is not smoothable if $r> n+2 + \frac6{n-5}$ (where $n>5$),
or in terms of $g$ if $r> g+5 + \frac6{g-2}$. For $g=4$ and $g=6$ it 
suffices that $r> g+5 $, see e.g. \cite{St-ns}.
The smallest example of a non-smoothable curve is $L_{10}^6$. The dimension
of its base space is $15$. A general smoothable  $L_{10}^6$ has a reducible
base space, with a smoothing component of dimension $20$ and the $15$-dimensional
equisingular component. For the general smoothable $L_r^n$ the smoothing component
has dimension smaller than the number of moduli.

A partition curve with $r=n+g$ branches and $\delta= n+2g-1$ is
$L_n^n\vee gA_2$, where $gA_2$ stands for $A_2\vee \dots\vee A_2$;
it belongs to the partition $(1,\dots,1,2\dots,2)$ of $n+2g$.

\begin{lemma}
The curve $L_{r-1}^n\vee A_2$ deforms into $L_r^n$.
\end{lemma}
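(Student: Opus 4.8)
The plan is to realise $L_r^n$ as a general fibre of an explicit flat, $\delta$-constant family whose special fibre is $L_{r-1}^n\vee A_2$, by deforming the parametrisation of the last branch. Work in $\A^{n+2}$ with coordinates $z_1,\dots,z_n,\eta,\zeta$ and identify $\A^n$ with $\{\eta=\zeta=0\}$. Fix general vectors $a_1,\dots,a_{r-1},c\in k^n$, so that any $n$ of them are linearly independent, and over the parameter line $\A^1_t$ consider the family of parametrised curves $\Gamma_t\colon\coprod_{i=1}^r\A^1\to\A^{n+2}$,
\[
\tau\mapsto(\tau a_i,0,0)\quad(1\le i\le r-1),\qquad \tau\mapsto(t\,\tau\,c,\ \tau^2,\ \tau^3).
\]
For $t=0$ the last branch is the cusp $(0,\tau^2,\tau^3)$ in the plane $\{z_1=\dots=z_n=0\}$, which meets the span $\A^n$ of the first $r-1$ lines only at the origin and transversally; hence the fibre over $0$ is exactly $L_{r-1}^n\vee A_2$.

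Next I would identify the general fibre. For $t\neq0$ every branch is smooth, the first $r-1$ having tangent directions $a_i$ and the last having tangent direction $c$ (its linear part is $t\tau c$); by the choice of the vectors these $r$ tangent directions are in general position. Thus the general fibre is an ordinary $r$-fold point with smooth branches whose tangent lines are in general position, and I claim such a singularity is analytically isomorphic to the union of those $r$ lines, i.e.\ to $L_r^n$. I would simultaneously check that $\delta$ is constant in the family. The first $r-1$ lines form $L_{r-1}^n$ with $\delta=2(r-1)-n-1$, and the last branch contributes exactly $2$ to the $\delta$-invariant of the total curve for every value of $t$: for $t\neq0$ this is the intersection multiplicity with $L_{r-1}^n$, computed on the smooth branch by pulling back the ideal of $L_{r-1}^n$ (which sends $\eta\mapsto\tau^2$), giving $(\tau^2)$ and hence the number $2$; for $t=0$ the wedge value $(X_1\cdot X_2)=1$ together with $\delta(A_2)=1$ gives $1+1=2$. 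In both cases $\delta=2r-n-1=\delta(L_r^n)$, in accordance with \eqref{eq:genus-of-union}.

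For flatness I would use that $\Gamma$ is a simultaneous normalisation of the total curve $\mathcal C\subset\A^{n+2}\times\A^1_t$ (over $t=0$ it restricts to the normalisation $\tau\mapsto(\tau^2,\tau^3)$ of the cusp); since $\delta$ is constant the resulting family of reduced curves is flat. Equivalently, $\mathcal C$ is reduced and each of its components dominates $\A^1_t$, so $t$ is a nonzerodivisor on $\mathcal O_{\mathcal C}$ and the family is flat over the line. Together with the two fibre computations this exhibits $L_{r-1}^n\vee A_2$ as the special fibre of a flat family with general fibre $L_r^n$, which is the assertion.

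The main obstacle is the identification of the general fibre with $L_r^n$: one must show that an ordinary $r$-fold point whose branches are smooth with tangent lines in general position can be straightened, by an analytic change of coordinates, simultaneously onto its $r$ tangent lines. For $r\le n+1$ there are no tangent moduli and this is immediate, but for $r\ge n+2$ the tangent configuration carries moduli, and one has to argue that for the generic choice of $a_1,\dots,a_{r-1},c$ no higher-order analytic invariant obstructs the simultaneous straightening. The remaining flatness bookkeeping---that the scheme-theoretic fibres of $\mathcal C$ are precisely the reduced curves described---is routine once the simultaneous normalisation and the constancy of $\delta$ are established.
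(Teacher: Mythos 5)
Your setup is the same as the paper's: you deform the parametrisation of the cusp to $(t\tau c,\tau^2,\tau^3)$, check that the special fibre is $L_{r-1}^n\vee A_2$, and argue flatness via $\delta$-constancy, all of which is fine and matches the paper. But the step you yourself flag as ``the main obstacle''---that for $t\neq 0$ the fibre is isomorphic to $L_r^n$---is precisely the content of the lemma, and your proposal does not prove it; it only expresses the hope that ``no higher-order analytic invariant obstructs the simultaneous straightening''. That hope is not a routine genericity statement, and as a general principle about curve germs it is false: a union of smooth branches with fixed, pairwise distinct tangents in general position need not be isomorphic to the union of its tangent lines. Already in the plane, an ordinary $5$-fold point $f_5+c\,x^3y^3=0$ (with $f_5$ a product of five general linear forms) has the same five tangents as $f_5=0$ but is not isomorphic to it for general $c$ (the Tjurina number drops, so the germ is not even quasi-homogeneous); the coefficient $c$ is exactly the kind of higher-order analytic invariant you need to exclude. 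Note also that genericity is not really available to you: the branches in your family are not generic curves with prescribed tangents, but the explicit curves $(\tau a_i,0,0)$ and $(t\tau c,\tau^2,\tau^3)$, so one must straighten \emph{this} configuration. Finally, since $L_r^n$ has moduli for $r\geq n+2$, even a successful abstract straightening must identify \emph{which} configuration of $r$ lines one lands on; the paper's later application (the monomial curve deforms into $L_{n+g}^n$ for \emph{any} general position of the lines) depends on this control.

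The missing idea, which is the heart of the paper's proof, is an explicit straightening. Normalise $c=a$ with $a_1=1$. Since the $r$ lines are in general position and $r\leq\binom{n+1}{2}$, the corresponding points of $\P^{n-1}$ impose independent conditions on quadrics, so there is a quadratic form $q(z_1,\dots,z_n)$ vanishing on the first $r-1$ lines and restricting to $\tau^2$ on the line $\tau\mapsto \tau a$. Then for $s\neq0$ (your $t$) the polynomial automorphism $z_{n+1}\mapsto z_{n+1}-q/s^2$, $z_{n+2}\mapsto z_{n+2}-z_1q/s^3$ of $\A^{n+2}$ fixes the first $r-1$ lines (on which $q$ vanishes) and maps the deformed branch $(as\tau,\tau^2,\tau^3)$ onto the line $(as\tau,0,0)$, because $q$ pulls back to $s^2\tau^2$ on it and $z_1q/s^3$ to $\tau^3$. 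Hence the general fibre is literally the union of the $r$ prescribed lines. Without this construction, or an equivalent rigorous substitute, your argument only shows that $L_{r-1}^n\vee A_2$ deforms $\delta$-constantly into \emph{some} curve with $r$ smooth branches whose tangents are in general position---not that this curve is $L_r^n$.
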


\begin{proof}
Let the $r$-th line in $L_r^n$ be  parametrised by
$(a_1t,\dots,a_nt)$. We may suppose that $a_1= 1$. 
Consider the deformation of the parametrisation of 
$L_{r-1}^n\vee A_2\subset \A^n\times \A^2$ , 
where only the paramatrisation of the cusp $A_2$ is
changed: 
$(a_1st,\dots,a_nst,t^2,t^3)$; here $s$ is the deformation parameter. 
As the $r$ lines impose independent conditions
on quadrics, there exists a quadric $q$ in the variables $(z_1,\dots,z_n)$ which vanishes
on the first $r-1$ lines, and restricts to $t^2$ on the line $(a_1t,\dots,a_nt)$.
The coordinate transformation $z_{n+1}\mapsto z_{n+1}-q/s^2$, 
$z_{n+2}\mapsto z_{n+2}-z_1q/s^3$ transforms $(a_1st,\dots,a_nst,t^2,t^3)$
into $(a_1st,\dots,a_nst,0,0)$ and leaves the first $r-1$ lines unchanged. Therefore
the resulting curve is for $s\neq 0$
isomorphic to $L_r^n$.
The deformation is flat as it is $\delta$-constant.
\end{proof}

As the monomial curve $X_{n+2g}$ deforms into $L_n^n\vee gA_2$, the proof of this lemma
shows that it  deforms into $L_{n+g}^n$ for any (general) position of the lines,
including smoothable  $L_{n+g}^n$, which have components of different dimensions.
We note also that $X_{N}$ deforms into $X_{N-1}$. Therefore $X_N$ with 
$N\geq 14$ has a base space with components  of different dimensions.

Such a simple argument is not available for elliptic partition curves. It would be interesting
to know into which type of singularities the Gorenstein monomial curve can deform.

\end{document}